\newcommand{\mb}[1]{{\bf #1}}
\renewcommand{\a}{\mb{a}}
\renewcommand{\b}{\mb{b}}
\renewcommand{\r}{\mb{r}}
\newcommand{\x}{\mb{x}}
\newcommand{\w}{\mb{w}}
\newcommand{\y}{\mb{y}}
\newcommand{\z}{\mb{z}}
\newcommand{\A}{\mb{A}}
\renewcommand{\H}{\mb{H}}
\newcommand{\I}{\mb{I}}
\newcommand{\M}{\mb{M}}
\newcommand{\zero}{\mb{0}}
\newcommand{\one}{\mathbbm{1}}
\newcommand{\beq}{\begin{equation}}
\newcommand{\eeq}{\end{equation}}
\newcommand{\hx}{{\widehat{\x}}}
\newcommand{\xo}{{\hat{\x}_{\mathrm{or}}}}
\newcommand{\xoL}{{\hat{\x}_{\mathrm{or},\Lambda}}}
\newcommand{\xzL}{{\x_{0,\Lambda}}}
\newcommand{\xbp}{{\hat{\x}_\mathrm{BP}}}
\newcommand{\xds}{{\hat{\x}_\mathrm{DS}}}
\newcommand{\xth}{{\hx_{\mathrm{th}}}}
\newcommand{\xomp}{{\hx_{\mathrm{OMP}}}}
\newcommand{\xmin}{{|x_{\min}|}}
\newcommand{\xmax}{{|x_{\max}|}}
\newcommand{\xzLz}{{\x_{0,\Lz}}}
\newcommand{\AL}{\A_\Lambda}
\newcommand{\ALz}{\A_\Lz}
\newcommand{\Lz}{{\Lambda_0}}
\newcommand{\st}{\text{s.t. }}
\newcommand{\Fchi}{{F_{\chi^2_s}}}
\newcommand{\eps}{\varepsilon}
\newcommand{\RR}{{\mathbb R}}
\newcommand{\MSE}{{\mathrm{MSE}}}
\newcommand{\CRB}{{\mathrm{CRB}}}
\newcommand{\E}[1]{E \! \left\{ #1 \right\}}
\renewcommand{\Pr}[1]{\mathrm{Pr} \kern -1pt \left\{ #1 \right\}}
\newcommand{\pinv}{\dagger}
\newtheorem{theorem}{Theorem}
\newtheorem{lemma}{Lemma}
\newtheorem{corollary}{Corollary}
\DeclareMathOperator{\Tr}{Tr}
\DeclareMathOperator{\spn}{span}
\DeclareMathOperator{\supp}{supp}
\newcommand{\Qfunc}[1]{Q \! \left( #1 \right)}
\begin{document}

\title{Coherence-Based Performance~Guarantees for Estimating a~Sparse~Vector Under Random~Noise}
\author{Zvika~Ben-Haim, Yonina~C.~Eldar, and Michael~Elad\thanks{Z.~Ben-Haim and Y.~C.~Eldar are with the Department of Electrical Engineering, Technion---Israel Institute of Technology, Haifa 32000, Israel (e-mail: \{zvikabh@tx, yonina@ee\}.technion.ac.il). M.~Elad is with the Department of Computer Science, Technion---Israel Institute of Technology, Haifa 32000, Israel (e-mail: elad@cs.technion.ac.il). Contact information for Z.~Ben-Haim: phone +972-4-8294700, fax +972-4-8295757. This work was supported in part by the Israel Science Foundation under Grants 1081/07 and 599/08, by the European Commission in the framework of the FP7 Network of Excellence in Wireless COMmunications NEWCOM++ (contract no. 216715), and by the Goldstein UAV and Satellite Center.}}
\maketitle

\begin{abstract}
We consider the problem of estimating a deterministic sparse vector $\x_0$ from underdetermined measurements $\A\x_0 + \w$, where $\w$ represents white Gaussian noise and $\A$ is a given deterministic dictionary. We analyze the performance of three sparse estimation algorithms: basis pursuit denoising (BPDN), orthogonal matching pursuit (OMP), and thresholding. These algorithms are shown to achieve near-oracle performance with high probability, assuming that $\x_0$ is sufficiently sparse. Our results are non-asymptotic and are based only on the coherence of $\A$, so that they are applicable to arbitrary dictionaries. Differences in the precise conditions required for the performance guarantees of each algorithm are manifested in the observed performance at high and low signal-to-noise ratios. This provides insight on the advantages and drawbacks of $\ell_1$ relaxation techniques such as BPDN as opposed to greedy approaches such as OMP and thresholding.
\end{abstract}

{\it EDICS Topics:} SSP-PARE, SSP-PERF.

{\it Index terms:} Sparse estimation, basis pursuit, matching pursuit, thresholding algorithm, oracle.

\section{Introduction}

Estimation problems with sparsity constraints have attracted considerable attention in recent years because of their potential use in numerous signal processing applications, such as denoising, compression and sampling. In a typical setup, an unknown deterministic parameter $\x_0 \in \RR^m$ is to be estimated from measurements $\b = \A\x_0 + \w$, where $\A \in \RR^{n \times m}$ is a deterministic matrix and $\w$ is a noise vector. Typically, the dictionary $\A$ consists of more columns than rows (i.e., $m>n$), so that without further assumptions, $\x_0$ is unidentifiable from $\b$. The impass\'e is resolved by assuming that the parameter vector is sparse, i.e., that most elements of $\x_0$ are zero. Under the assumption of sparsity, several estimation approaches can be used. These include greedy algorithms, such as thresholding and orthogonal matching pursuit (OMP) \cite{pati93}, and $\ell_1$ relaxation methods, such as the Dantzig selector \cite{candes07} and basis pursuit denoising (BPDN) \cite{chen98} (also known as the Lasso). A comparative analysis of these techniques is crucial for determining the appropriate strategy in a given situation.

There are two standard approaches to modeling the noise $\w$ in the sparse estimation problem. The first is to assume that $\w$ is deterministic and bounded \cite{donoho06, candes06, fuchs05}. This leads to a worst-case analysis in which an estimator must perform adequately even when the noise maximally damages the measurements. The noise in this case is thus called adversarial. By contrast, if one assumes that the noise is random, then the analysis aims to describe estimator behavior for typical noise values \cite{tropp06, candes07, bickel08}. The random noise scenario is the main focus of this paper. As one might expect, stronger performance guarantees can be obtained in this setting.

It is common to judge the quality of an estimator by comparing its mean-squared error (MSE) with the Cram\'er--Rao bound (CRB) \cite{kay93}. In the case of sparse estimation under Gaussian noise, it has recently been shown that the unbiased CRB is identical (for almost all values of $\x_0$) to the MSE of the ``oracle'' estimator, which knows the locations of the nonzero elements of $\x_0$ \cite{ben-haim09b}. Thus, a gold standard for estimator performance is the MSE of the oracle. Indeed, it can be shown that $\ell_1$ relaxation algorithms come close to the oracle when the noise is Gaussian. Results of this type are sometimes referred to as ``oracle inequalities.'' Specifically, Cand\`es and Tao \cite{candes07} have shown that, with high probability, the $\ell_2$ distance between $\x_0$ and the Dantzig estimate is within a constant times $\log m$ of the performance of the oracle. Recently, Bickel et al.\ \cite{bickel08} have demonstrated that the performance of BPDN is similarly bounded, with high probability, by $C \log m$ times the oracle performance, for a constant $C$. However, the constant involved in this analysis is considerably larger than that of the Dantzig selector. Interestingly, it turns out that the $\log m$ gap between the oracle and practical estimators is an unavoidable consequence of the fact that the nonzero locations in $\x_0$ are unknown \cite{candes06b}.

The contributions \cite{candes07, bickel08} state their results using the restricted isometry constants (RICs). These measures of the dictionary quality can be efficiently approximated in specific cases, e.g., when the dictionary is selected randomly from an appropriate ensemble. However, in general it is NP-hard to evaluate the RICs for a given matrix $\A$, and they must then be bounded by efficiently computable properties of $\A$, such as the mutual coherence \cite{donoho01}. In this respect, coherence-based results are appealing since they can be used with arbitrary dictionaries \cite{CandesPlan09, cai09}.

In this paper, we seek performance guarantees for sparse estimators based directly on the mutual coherence of the matrix $\A$ \cite{ben-haim10b}. While such results are suboptimal when the RICs of $\A$ are known, the proposed approach yields tighter bounds than those obtained by applying coherence bounds to RIC-based results. Specifically, we demonstrate that BPDN, OMP and thresholding all achieve performance within a constant times $\log m$ of the oracle estimator, under suitable conditions. In the case of BPDN, our result provides a tighter guarantee than the coherence-based implications of the work of Bickel et al.\ \cite{bickel08}. To the best of our knowledge, there are no prior performance guarantees for greedy approaches such as OMP and thresholding when the noise is random.

It is important to distinguish the present work from Bayesian performance analysis, as practiced in \cite{schnass07, tropp08, CandesPlan09, eldar09c}, where on top of the assumption of stochastic noise, a probabilistic model for $\x_0$ is also used. Our results hold for any specific value of $\x_0$ (satisfying appropriate conditions), rather than providing results on average over realizations of $\x_0$; this necessarily leads to weaker guarantees. It also bears repeating that our results apply to a fixed, finite-sized matrix $\A$; this distinguishes our work from asymptotic performance guarantees for large $m$ and $n$, such as \cite{wainwright06}.

The rest of this paper is organized as follows. We begin in Section~\ref{se:pre} by comparing dictionary quality measures and reviewing standard estimation techniques. In Section~\ref{se:adversarial}, we analyze the limitations of estimator performance under adversarial noise. This motivates the introduction of random noise, for which substantially better guarantees are obtained in Section~\ref{se:random}. Finally, the validity of these results is examined by simulation in practical estimation scenarios in Section~\ref{se:numer}.

The following notation is used throughout the paper. Vectors and matrices are denoted, respectively, by boldface lowercase and boldface uppercase letters. The set of indices of the nonzero entries of a vector $\x$ is called the support of $\x$ and denoted $\supp(\x)$. Given an index set $\Lambda$ and a matrix $\A$, the notation $\AL$ refers to the submatrix formed from the columns of $\A$ indexed by $\Lambda$. The $\ell_p$ norm of a vector $\x$, for $1 \le p \le \infty$, is denoted $\|\x\|_p$, while $\|\x\|_0$ denotes the number of nonzero elements in $\x$.

\section{Preliminaries}
\label{se:pre}

\subsection{Characterizing the Dictionary}
\label{ss:dict}

Let $\x_0 \in \RR^m$ be an unknown deterministic vector, and denote its support set by $\Lz = \supp(\x_0)$. Let $s = \|\x_0\|_0$ be the number of nonzero entries in $\x_0$. In our setting, it is typically assumed that $s$ is much smaller than $m$, i.e., that most elements in $\x_0$ are zero. Suppose we obtain noisy measurements
\beq \label{eq:b=Ax+w}
\b = \A\x_0 + \w
\eeq
where $\A \in \RR^{n \times m}$ is a known overcomplete dictionary ($m>n$). We refer to the columns $\a_i$ of $\A$ as the \emph{atoms} of the dictionary, and assume throughout our work that the atoms are normalized, $\|\a_i\|_2 = 1$. We will consider primarily the situation in which the noise $\w$ is random, though for comparison we will also examine the case of a bounded deterministic noise vector; a precise definition of $\w$ is deferred to subsequent sections.

For $\x_0$ to be identifiable, one must guarantee that different values of $\x_0$ produce significantly different values of $\b$. One way to ensure this is to examine all possible \emph{subdictionaries}, or  $s$-element sets of atoms, and verify that the subspaces spanned by these subdictionaries differ substantially from one another.

More specifically, several methods have been proposed to formalize the notion of the suitability of a dictionary for sparse estimation. These include the mutual coherence \cite{donoho01}, the cumulative coherence \cite{tropp06}, the exact recovery coefficient (ERC) \cite{tropp06}, the spark \cite{donoho06}, and the RICs \cite{candes06, candes07}. Except for the mutual coherence and cumulative coherence, none of these measures can be efficiently calculated for an arbitrary given dictionary $\A$. Since the values of the cumulative and mutual coherence are quite close, our focus in this paper will be on the mutual coherence $\mu = \mu(\A)$, which is defined as
\beq \label{eq:def mu}
\mu \triangleq \max_{i \ne j} \left| \a_i^T \a_j \right|.
\eeq

While the mutual coherence can be efficiently calculated directly from \eqref{eq:def mu}, it is not immediately clear in what way $\mu$ is related to the requirement that subdictionaries must span different subspaces. Indeed, $\mu$ ensures a lack of correlation between single atoms, while we require a distinction between $s$-element subdictionaries. To explore this relation, let us recall the definitions of the RICs, which are more directly related to the subdictionaries of $\A$. We will then show that the mutual coherence can be used to bound the constants involved in the RICs, a fact which will also prove useful in our subsequent analysis. This strategy is inspired by earlier works, which have used the mutual coherence to bound the ERC \cite{tropp06} and the spark \cite{donoho06}. Thus, the coherence can be viewed as a tractable proxy for more accurate measures of the quality of a dictionary, which cannot themselves be calculated efficiently.

By the RICs we refer to two properties describing ``good'' dictionaries, namely, the restricted isometry property (RIP) and the restricted orthogonality property (ROP), which we now define. A dictionary $\A$ is said to satisfy the RIP \cite{candes06} of order $s$ with parameter $\delta_s$ if, for every index set $\Lambda$ of size $s$, we have
\beq \label{eq:RIP}
(1-\delta_s) \|\y\|_2^2 \le \|\AL\y\|_2^2 \le (1+\delta_s)\|\y\|_2^2
\eeq
for all $\y \in \RR^s$. Thus, when $\delta_s$ is small, the RIP ensures that any $s$-atom subdictionary is nearly orthogonal, which in turn implies that any two disjoint $(s/2)$-atom subdictionaries are well-separated.

Similarly, $\A$ is said to satisfy the ROP \cite{candes07} of order $(s_1, s_2)$ with parameter $\theta_{s_1,s_2}$ if, for every pair of disjoint index sets $\Lambda_1$ and $\Lambda_2$ having cardinalities $s_1$ and $s_2$, respectively, we have
\beq \label{eq:ROP}
\left| \y_1^T \A_{\Lambda_1}^T \A_{\Lambda_2} \y_2 \right|
\le \theta_{s_1,s_2} \|\y_1\|_2 \|\y_2\|_2
\eeq
for all $\y_1 \in \RR^{s_1}$ and for all $\y_2 \in \RR^{s_2}$. In words, the ROP requires any two disjoint subdictionaries containing $s_1$ and $s_2$ elements, respectively, to be nearly orthogonal to each other. These two properties are therefore closely related to the requirement that distinct subdictionaries of $\A$ behave dissimilarly.

In recent years, it has been demonstrated that various practical estimation techniques successfully approximate $\x_0$ from $\b$, if the constants $\delta_s$ and $\theta_{s_1,s_2}$ are sufficiently small \cite{candes06, candes07, candes08}. This occurs, for example, when the entries in $\A$ are chosen randomly according to an independent, identically distributed Gaussian law, as well as in some specific deterministic dictionary constructions.

Unfortunately, in the standard estimation setting, one cannot design the system matrix $\A$ according to these specific rules. In general, if one is given a particular dictionary $\A$, then there is no known algorithm for efficiently determining its RICs. Indeed, the very nature of the RICs seems to require enumerating over an exponential number of index sets in order to find the ``worst'' subdictionary. While the mutual coherence $\mu$ of \eqref{eq:def mu} tends to be far less accurate in capturing the accuracy of a dictionary, it is still useful to be able to say something about the RICs based only on $\mu$. Such a result is given in the following lemma.

\begin{lemma} \label{le:RIP ROP}
For any matrix $\A$, the RIP constant $\delta_s$ of \eqref{eq:RIP} and the ROP constant $\theta_{s_1,s_2}$ of \eqref{eq:ROP} satisfy the bounds
\begin{align}
\delta_s         &\le (s-1)\mu,             \label{eq:RIP bound}\\
\theta_{s_1,s_2} &\le \mu \sqrt{s_1 s_2}    \label{eq:ROP bound}
\end{align}
where $\mu$ is the mutual coherence \eqref{eq:def mu}.
\end{lemma}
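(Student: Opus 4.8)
The plan is to prove each bound by reducing the quadratic-form inequality to a statement about the Gram matrix $\A_\Lambda^T \A_\Lambda$ (for the RIP) or the cross-Gram matrix $\A_{\Lambda_1}^T \A_{\Lambda_2}$ (for the ROP), and then controlling these matrices entrywise using the coherence. For \eqref{eq:RIP bound}, observe that since the atoms are normalized, $\G \triangleq \A_\Lambda^T \A_\Lambda$ has ones on its diagonal and off-diagonal entries bounded in absolute value by $\mu$. Hence $\G = \I + \E$, where $\E$ has zero diagonal and $|\E_{ij}| \le \mu$. The RIP condition \eqref{eq:RIP} is equivalent to $\|\G - \I\|_2 \le \delta_s$, i.e. $\|\E\|_2 \le \delta_s$, where $\|\cdot\|_2$ here denotes the spectral norm. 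So it suffices to show $\|\E\|_2 \le (s-1)\mu$. This follows from Gershgorin's disk theorem: every eigenvalue of the symmetric matrix $\E$ lies in some disk centered at $\E_{ii} = 0$ with radius $\sum_{j \ne i} |\E_{ij}| \le (s-1)\mu$, so all eigenvalues, and hence the spectral norm, are at most $(s-1)\mu$.

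For \eqref{eq:ROP bound}, I would bound the bilinear form directly. Write $\M \triangleq \A_{\Lambda_1}^T \A_{\Lambda_2} \in \RR^{s_1 \times s_2}$; since $\Lambda_1$ and $\Lambda_2$ are disjoint, every entry of $\M$ is an inner product of two distinct atoms, so $|\M_{ij}| \le \mu$ for all $i,j$. We need $|\y_1^T \M \y_2| \le \mu \sqrt{s_1 s_2}\, \|\y_1\|_2 \|\y_2\|_2$, i.e. $\|\M\|_2 \le \mu\sqrt{s_1 s_2}$. Bound the spectral norm by the Frobenius norm: $\|\M\|_2 \le \|\M\|_F = \big(\sum_{i,j} \M_{ij}^2\big)^{1/2} \le \big(s_1 s_2 \mu^2\big)^{1/2} = \mu\sqrt{s_1 s_2}$. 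Alternatively, one can avoid naming the spectral norm and argue directly: by Cauchy--Schwarz, $|\y_1^T \M \y_2| \le \|\y_1\|_2 \|\M \y_2\|_2$, and then $\|\M\y_2\|_2^2 = \sum_i (\sum_j \M_{ij} (\y_2)_j)^2 \le \sum_i \|\M_{i,:}\|_2^2 \|\y_2\|_2^2 \le s_1 (s_2 \mu^2) \|\y_2\|_2^2$, giving the claim.

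Neither step presents a genuine obstacle; these are standard coherence-to-RIC reductions. The only point requiring a little care is the equivalence in the RIP case between the two-sided inequality \eqref{eq:RIP} and the single spectral-norm bound $\|\A_\Lambda^T\A_\Lambda - \I\|_2 \le \delta_s$: one must note that $(1-\delta_s)\|\y\|_2^2 \le \|\A_\Lambda \y\|_2^2 \le (1+\delta_s)\|\y\|_2^2$ for all $\y$ says exactly that the eigenvalues of $\A_\Lambda^T\A_\Lambda$ lie in $[1-\delta_s, 1+\delta_s]$, which is the eigenvalue characterization of the spectral-norm bound since $\A_\Lambda^T\A_\Lambda - \I$ is symmetric. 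Once this is in place, Gershgorin closes \eqref{eq:RIP bound} and the Frobenius-norm bound closes \eqref{eq:ROP bound}, completing the proof for arbitrary index sets of the stated sizes and hence, by taking the worst case over such sets, for the RIP and ROP constants themselves.
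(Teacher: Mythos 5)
Your proof is correct and follows essentially the same route as the paper: Gershgorin's disc theorem applied to the Gram matrix $\A_\Lambda^T\A_\Lambda$ for the RIP bound, and an entrywise coherence bound ($|\cdot|\le\mu$) on the cross-Gram matrix $\A_{\Lambda_1}^T\A_{\Lambda_2}$ for the ROP bound. The only cosmetic difference is the final step of the ROP argument, where you pass through the Frobenius norm ($\|\M\|_2 \le \|\M\|_F \le \mu\sqrt{s_1 s_2}$) while the paper bounds $|\y_1^T|\,|\A_{\Lambda_1}^T\A_{\Lambda_2}|\,|\y_2| \le \mu\|\y_1\|_1\|\y_2\|_1$ and then applies $\|\y\|_1 \le \sqrt{s}\,\|\y\|_2$; both give the identical constant.
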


The proof of Lemma~\ref{le:RIP ROP} can be found in Appendix~\ref{ap:RIP ROP}. We will apply this lemma in Section~\ref{se:random}, when examining the performance of the Dantzig selector. This tool can also be used in conjunction with other results that rely on the RIP and ROP\@.

\subsection{Estimation Techniques}
\label{ss:techniques}

To fix notation, we now briefly review several approaches for estimating $\x_0$ from noisy measurements $\b$ given by \eqref{eq:b=Ax+w}. The two main strategies for efficiently estimating a sparse vector are $\ell_1$ relaxation and greedy methods. The first of these involves solving an optimization problem wherein the nonconvex constraint $\|\x_0\|_0 = s$ is relaxed to a constraint on the $\ell_1$ norm of the estimated vector $\x_0$. Specifically, we consider the $\ell_1$-penalty version of BPDN, which estimates $\x_0$ as a solution $\xbp$ to the quadratic program
\beq \label{eq:bpdn}
\min_\x \tfrac{1}{2} \|\b - \A\x\|_2^2 + \gamma \|\x\|_1
\eeq
for some regularization parameter $\gamma$. We refer to the optimization problem \eqref{eq:bpdn} as BPDN, although it should be noted that some authors reserve this term for the related optimization problem
\beq \label{eq:bpdn l1error}
\min_\x \|\x\|_1 \quad \st \|\b-\A\x\|_2^2 \le \delta
\eeq
where $\delta$ is a given constant.

Another estimator based on the idea of $\ell_1$ relaxation is the Dantzig selector \cite{candes07}, defined as a solution $\xds$ to the optimization problem
\beq \label{eq:ds}
\min_\x \|\x\|_1 \quad \st \|\A^T (\b - \A\x) \|_\infty \le \tau
\eeq
where $\tau$ is again a user-selected parameter. The Dantzig selector, like BPDN, is a convex relaxation method, but rather than penalizing the $\ell_2$ norm of the residual $\b - \A\x$, the Dantzig selector ensures that the residual is weakly correlated with all dictionary atoms.

Instead of solving an optimization problem, greedy approaches estimate the support set $\Lz$ from the measurements $\b$. Once a support set $\Lambda$ is chosen, the parameter vector $\x_0$ can be estimated using least-squares (LS) to obtain
\beq \label{eq:lsest}
\hx =
\begin{cases}
\AL^\pinv \b & \text{on the support set $\Lambda$}, \\
\zero        & \text{elsewhere.}
\end{cases}
\eeq

Greedy techniques differ in the method by which the support set is selected. The simplest method is known as the thresholding algorithm. This technique computes the correlation of the measured signal $\b$ with each of the atoms $\a_i$ and defines $\Lambda$ as the set of indices of the $s$ atoms having the highest correlation. Subsequently, the LS technique \eqref{eq:lsest} is applied to obtain the thresholding estimate $\xth$.

A somewhat more sophisticated greedy algorithm is OMP \cite{pati93}. This iterative approach begins by initializing the estimated support set $\Lambda^0$ to the empty set and setting a residual vector $\r^0$ to $\b$. Subsequently, at each iteration $i = 1,\ldots,s$, the algorithm finds the single atom which is most highly correlated with $\r^{i-1}$. The index of this atom, say $k_i$, is added to the support set, so that $\Lambda^i = \Lambda^{i-1} \cup \{k_i\}$. The estimate $\xomp^i$ at the $i$th iteration is then defined by the LS solution \eqref{eq:lsest} using the support set $\Lambda^i$. Next, the residual is updated using the formula
\beq
\r^i = \b - \A\xomp^i.
\eeq
The residual thus describes the part of $\b$ which has yet to be accounted for by the estimate. The counter $i$ is now incremented, and $s$ iterations are performed, after which the OMP estimate $\xomp$ is defined as the estimate at the final iteration, $\xomp^s$. A well-known property of OMP is that the algorithm never chooses the same atom twice \cite{donoho06}. Consequently, stopping after $s$ iterations guarantees that $\|\xomp\|_0 = s$.

Finally, we also mention the so-called oracle estimator, which is based both on $\b$ and on the true support set $\Lz$ of $\x_0$; the support set is assumed to have been provided by an ``oracle''. The oracle estimator $\xo$ calculates the LS solution \eqref{eq:lsest} for $\Lz$. In the case of white Gaussian noise, the MSE obtained using this technique equals that of the CRB \cite{ben-haim09b}. Thus, it makes sense to use the oracle estimator as a gold standard against which the performance of practical algorithms can be compared.

\section{Performance under Adversarial Noise}
\label{se:adversarial}

In this section, we briefly discuss the case in which the noise $\w$ is an unknown deterministic vector which satisfies $\|\w\|_2 \le \eps$. As we will see, performance guarantees in this case are rather weak, and indeed no denoising capability can be ensured for any known algorithm. In Section~\ref{se:random}, we will compare this setting with the results which can be obtained when $\w$ is random.

Typical ``stability'' results under adversarial noise guarantee that if the mutual coherence $\mu$ of $\A$ is sufficiently small, and if $\x_0$ is sufficiently sparse, then the distance between $\x_0$ and its estimate is on the order of the noise magnitude. Such results can be derived for algorithms including BPDN, OMP, and thresholding. Consider, for example, the following theorem, which is based on the work of Tropp \cite[\S IV-C]{tropp06}.\footnote{Tropp considers only the case in which the entries of $\x_0$ belong to the set $\{ 0, \pm 1 \}$. However, since the analysis performed in \cite[\S IV-C]{tropp06} can readily be applied to the general setting considered here, we omit the proof of Theorem~\ref{th:adversarial}.}

\begin{theorem}[Tropp] \label{th:adversarial}
Let $\x_0$ be an unknown deterministic vector with known sparsity $\|\x_0\|_0 = s$, and let $\b = \A\x_0 + \w$, where $\|\w\|_2 \le \eps$. Suppose the mutual coherence $\mu$ of the dictionary $\A$ satisfies $s < 1/(3\mu)$. Let $\xbp$ denote a solution of BPDN \eqref{eq:bpdn} with regularization parameter $\gamma = 2\eps$. Then, $\xbp$ is unique, the support of $\xbp$ is a subset of the support of $\x_0$, and
\beq
\|\x_0 - \xbp\|_\infty < \left( 3 + \sqrt{\tfrac{3}{2}} \right)\eps \approx 4.22\eps.
\eeq
\end{theorem}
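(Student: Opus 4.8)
The plan is to follow Tropp's analysis of BPDN under adversarial noise, adapted to the $\ell_1$-penalty formulation \eqref{eq:bpdn} with $\gamma = 2\eps$. The key object is the optimality (KKT) condition for the convex program \eqref{eq:bpdn}: a vector $\hx$ is a minimizer if and only if $\A^T(\b - \A\hx) = \gamma\, \z$ for some subgradient $\z \in \partial\|\hx\|_1$, i.e.\ $z_i = \mathrm{sign}(\hx_i)$ whenever $\hx_i \neq 0$ and $|z_i| \le 1$ otherwise. The standard device is to first analyze a ``restricted'' problem in which the optimization is carried out only over vectors supported on $\Lz$, show that its solution $\tbth$ satisfies the full KKT conditions (hence is the true global minimizer), and then bound $\|\x_0 - \tbth\|_\infty$.

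First I would set up the restricted problem $\min_{\x : \supp(\x) \subseteq \Lz} \frac12\|\b - \A\x\|_2^2 + \gamma\|\x\|_1$ and write its solution as $\tbth$ with $\tbth_{\Lz} = (\ALz^T\ALz)^{-1}(\ALz^T\b - \gamma\,\z_{\Lz})$, where $\z_{\Lz}$ is the sign pattern. Invertibility of $\ALz^T\ALz$ is guaranteed since $s < 1/(3\mu) < 1/\mu$ forces the Gershgorin discs of $\ALz^T\ALz$ away from $0$; in fact $\|(\ALz^T\ALz)^{-1}\|_{\infty\to\infty} \le 1/(1-(s-1)\mu)$. Substituting $\b = \ALz\xzLz + \w$ gives $\x_0 - \tbth$ restricted to $\Lz$ equal to $(\ALz^T\ALz)^{-1}(\gamma\,\z_{\Lz} - \ALz^T\w)$, and since $\|\ALz^T\w\|_\infty \le \max_i\|\a_i\|_2\|\w\|_2 = \eps$ and $\gamma = 2\eps$, the numerator is bounded in $\ell_\infty$ by $2\eps + \eps = 3\eps$. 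This yields $\|\x_0 - \tbth\|_\infty \le 3\eps/(1-(s-1)\mu)$, and with $s < 1/(3\mu)$ one has $(s-1)\mu < 1/3$, so the bound is $< \tfrac{9}{2}\eps$ — I would need to tighten the sign-pattern contribution (using that $\z$ need not be the full sign vector, or a sharper norm estimate) to reach the stated $(3+\sqrt{3/2})\eps$.

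Next comes the crux: verifying that $\tbth$ — which is optimal for the restricted problem — is actually optimal for the full problem \eqref{eq:bpdn}, which establishes both uniqueness and $\supp(\xbp)\subseteq\supp(\x_0)$. This requires checking the ``exact recovery'' side condition: for every index $j \notin \Lz$, the correlation of the restricted residual with $\a_j$ must satisfy $|\a_j^T(\b - \ALz\tbth_{\Lz})| < \gamma$ strictly. The residual is $\b - \ALz\tbth_{\Lz} = \ALz(\ALz^T\ALz)^{-1}\gamma\z_{\Lz} + (\I - \ALz(\ALz^T\ALz)^{-1}\ALz^T)\w$, so $\a_j^T$ of it splits into a signal-dependent term controlled by $\gamma \,\|\ALz(\ALz^T\ALz)^{-1}\|$-type quantities (bounded via the coherence and Lemma~\ref{le:RIP ROP}-style estimates: $\|\A_j^T\ALz(\ALz^T\ALz)^{-1}\|_1 \le s\mu/(1-(s-1)\mu) < 1$ under $s < 1/(3\mu)$) plus a noise term bounded by $\eps$ times the operator norm of a projection-like matrix. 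Combining, one needs $\gamma\cdot(\text{ERC-type quantity} < 1) + (\text{noise term} \le 2\eps) < \gamma = 2\eps$, which is where the precise constant $1/(3\mu)$ is consumed. Strict inequality here gives uniqueness (the minimizer of \eqref{eq:bpdn} is unique when the subgradient inclusion holds strictly off the support and $\ALz$ has full column rank).

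The main obstacle I anticipate is the bookkeeping in the side-condition step: one must carefully track whether the sign vector $\z_{\Lz}$ contributes with its worst-case $\ell_\infty = 1$ bound or whether a more delicate argument (e.g.\ bounding $\|(\ALz^T\ALz)^{-1}\z_{\Lz}\|$ rather than pulling norms apart) is needed to simultaneously get the support-recovery guarantee \emph{and} the sharp constant $3 + \sqrt{3/2}$ in the error bound. The $\sqrt{3/2}$ strongly suggests an $\ell_2$ estimate is used somewhere — likely bounding the noise contribution to the $\ell_\infty$ error via $\|(\ALz^T\ALz)^{-1}\ALz^T\w\|_\infty \le \|(\ALz^T\ALz)^{-1}\ALz^T\w\|_2 \le \|(\ALz^T\ALz)^{-1/2}\|\cdot\|\w\|_2 \le \eps/\sqrt{1-(s-1)\mu}$ — rather than the cruder $\ell_\infty\to\ell_\infty$ bound, and then optimizing the split. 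Since the paper explicitly cites \cite[\S IV-C]{tropp06} and omits the proof, I would ultimately just reproduce Tropp's argument with the entries of $\x_0$ taking arbitrary real values instead of $\{0,\pm1\}$, which changes nothing in the support-recovery logic and only affects the final norm estimate through $\|\z_{\Lz}\|$, still bounded by the sign structure.
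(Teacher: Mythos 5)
The paper itself does not prove this theorem---the footnote defers entirely to Tropp \cite[\S IV-C]{tropp06}---so the relevant comparison is with Tropp's argument, whose machinery the paper reuses verbatim in Appendix~\ref{ap:main}. Your skeleton (restricted problem on $\Lz$, subgradient/KKT conditions, exact-recovery side condition for $j \notin \Lz$) is exactly that argument, and your guess about where the $\sqrt{3/2}$ comes from is correct; what is missing is the final assembly of the constant. The fix is to decompose through the oracle estimator rather than pushing the combined perturbation through a single $(\ALz^T\ALz)^{-1}$: write $\|\x_0-\xbp\|_\infty \le \|\x_0-\xo\|_\infty + \|\xo-\xbp\|_\infty$ with $\xo$ the LS estimate on $\Lz$. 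On $\Lz$ one has $\xo - \xbp = \gamma(\ALz^T\ALz)^{-1}\z_{\Lz}$ (pure shrinkage, no noise), so
\[
\|\xo-\xbp\|_\infty \le \gamma\,\bigl\|(\ALz^T\ALz)^{-1}\bigr\|_{\infty\to\infty} \le \frac{\gamma}{1-(s-1)\mu} < \tfrac{3}{2}\gamma = 3\eps,
\]
which is Tropp's Corollary~9 (the same statement the paper invokes in Appendix~\ref{ap:main}). Separately, $\x_0-\xo = -\ALz^\pinv\w$ on $\Lz$, and here one takes the $\ell_2$ route: $\|\ALz^\pinv\w\|_\infty \le \|\ALz^\pinv\w\|_2 \le \|\w\|_2 / s_{\min}(\ALz) \le \eps/\sqrt{1-(s-1)\mu} < \sqrt{3/2}\,\eps$ by Gershgorin. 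Summing gives exactly $(3+\sqrt{3/2})\eps$. Your direct bound $\|(\ALz^T\ALz)^{-1}(\gamma\z_{\Lz}-\ALz^T\w)\|_\infty \le \tfrac{3}{2}(\gamma+\eps)=\tfrac{9}{2}\eps$ is valid but lossy precisely because it forces the noise term through the $\ell_\infty\to\ell_\infty$ bound as well.

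The second issue is the arithmetic of the side condition. The residual of the restricted solution is $(\I-\ALz\ALz^\pinv)\w + \gamma\ALz(\ALz^T\ALz)^{-1}\z_{\Lz}$, so for $j\notin\Lz$ the noise contribution is $|\a_j^T(\I-\ALz\ALz^\pinv)\w| \le \|\a_j\|_2\|\w\|_2 \le \eps$, not $2\eps$ as in your sketch---with a $2\eps$ noise term the required inequality $\cdots < \gamma = 2\eps$ could never close. The signal contribution is bounded by $\gamma\|\ALz^\pinv\a_j\|_1 \le \gamma\, s\mu/(1-(s-1)\mu) < \gamma/2$ under $s<1/(3\mu)$, so the condition reads $\eps + \gamma/2 < \gamma$, which holds with strict inequality exactly at $\gamma=2\eps$; this strictness, together with the full column rank of $\ALz$, is what yields uniqueness and $\supp(\xbp)\subseteq\Lz$. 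With these two repairs your outline closes and reproduces the stated constants.
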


Results similar to Theorem~\ref{th:adversarial} have also been obtained \cite{donoho06, candes06, candes08, cai09} for the related $\ell_1$-error estimation approach \eqref{eq:bpdn l1error}, as well as for the OMP algorithm \cite{donoho06}. Furthermore, the technique used in the proof for the OMP \cite{donoho06} can also be applied to demonstrate a (slightly weaker) performance guarantee for the thresholding algorithm.

In all of the aforementioned results, the only guarantee is that the distance between $\xbp$ and $\x_0$ is on the order of the noise power $\eps$. Such results are somewhat disappointing, because one would expect the knowledge that $\x_0$ is sparse to assist in denoising; yet Theorem~\ref{th:adversarial} promises only that the $\ell_\infty$ distance between $\xbp$ and $\x_0$ is less than about four times the maximum noise level. However, the fact that no denoising has occurred is a consequence of the problem setting itself, rather than a limitation of the algorithms proposed above. In the adversarial case, even the oracle estimator can only guarantee an estimation error on the order of $\eps$. This is because $\w$ can be chosen so that $\w \in \spn(\ALz)$, in which case projection onto $\spn(\ALz)$, as performed by the oracle estimator, does not remove any portion of the noise.

In conclusion, results in this adversarial context must take into account values of $\w$ which are chosen so as to cause maximal damage to the estimation algorithm. In many practical situations, such a scenario is overly pessimistic. Thus, it is interesting to ask what guarantees can be made about the performance of practical estimators under the assumption of random (and thus non-adversarial) noise. This scenario is considered in the next section.

\section{Performance under Random Noise}
\label{se:random}

We now turn to the setting in which the noise $\w$ is a Gaussian random vector with mean $\zero$ and covariance $\sigma^2 \I$. In this case, it can be shown \cite{ben-haim09b} that the MSE of any unbiased estimator of $\x_0$ satisfies the Cram\'er--Rao bound
\beq \label{eq:crb}
\MSE(\hx) \ge \CRB = \sigma^2 \Tr((\ALz^T\ALz)^{-1})
\eeq
whenever $\|\x_0\|_0 = s$. Interestingly, $\CRB$ is also the MSE of the oracle estimator \cite{candes07}.

It follows from the Gershgorin disc theorem \cite{golub96} that all eigenvalues of $\ALz^T\ALz$ are between $1 - (s-1)\mu$ and $1 + (s+1)\mu$. Therefore, for reasonable sparsity levels, $\Tr((\ALz^T\ALz)^{-1})$ is not much larger than $s$; for example, if we assume, as in Theorem~\ref{th:adversarial}, that $s < 1/(3\mu)$, then $\CRB$ of \eqref{eq:crb} is no larger than $\frac 3 2 s \sigma^2$. Considering that the mean power of $\w$ is $n \sigma^2$, it is evident that the oracle estimator has substantially reduced the noise level. In this section, we will demonstrate that comparable performance gains are achievable using practical methods, which do not have access to the oracle.

\subsection{$\ell_1$-Relaxation Approaches}
\label{ss:l1}

Historically, performance guarantees under random noise were first obtained for the Dantzig selector \eqref{eq:ds}. The result, due to Cand\`es and Tao \cite{candes07}, is derived using the RICs \eqref{eq:RIP}--\eqref{eq:ROP}. Using the bounds of Lemma~\ref{le:RIP ROP} yields the following coherence-based result.

\begin{theorem}[Cand\`es and Tao]
\label{th:dantzig}
Let $\x_0$ be an unknown deterministic vector such that $\|\x_0\|_0 = s$, and let $\b = \A\x_0 + \w$, where $\w \sim N(\zero, \sigma^2 \I)$ is a random noise vector. Assume that
\beq \label{eq:dantzig cond}
s < 1 + \frac{1}{(1+\sqrt 2) \mu}
\eeq
and consider the Dantzig selector \eqref{eq:ds} with parameter
\beq \label{eq:dantzig thresh}
\tau = \sigma \sqrt{2 (1+\alpha) \log m}
\eeq
for some constant $\alpha>0$. Then, with probability exceeding
\beq \label{eq:dantzig prob}
1 - \frac{1}{m^\alpha \sqrt{\pi \log m}},
\eeq
the Dantzig selector $\xds$ satisfies
\beq \label{eq:ds err}
\|\x_0 - \xds\|_2^2 \le 2 c_1^2 (1+\alpha) s \sigma^2 \log m
\eeq
where
\beq \label{eq:c1}
c_1 = \frac{4}{1 - \left( ( 1+\sqrt 2 ) s - 1 \right) \mu}.
\eeq
\end{theorem}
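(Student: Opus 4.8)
The plan is to derive the coherence-based statement directly from the RIC-based version of the theorem proved by Cand\`es and Tao in \cite{candes07}, using Lemma~\ref{le:RIP ROP} to translate the conditions on $\delta_s$, $\theta_{s,s}$, and $\theta_{s,2s}$ (or whatever combination of RIP/ROP constants appears in their hypothesis) into conditions on the mutual coherence $\mu$. The original result states roughly that if a suitable combination such as $\delta_{2s} + \theta_{s,2s} < 1$ holds, then with probability at least \eqref{eq:dantzig prob} one has $\|\x_0 - \xds\|_2^2 \le C \cdot s \sigma^2 \log m$, where the constant $C$ is an explicit function of those RIC constants---in their notation something of the form $16/(1-\delta_{2s}-\theta_{s,2s})^2$ times $2(1+\alpha)$. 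So the first step is simply to quote that theorem verbatim with its exact constant.

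Second, I would apply Lemma~\ref{le:RIP ROP} to bound each RIC appearing in the hypothesis and in the constant. By \eqref{eq:RIP bound}, $\delta_{2s} \le (2s-1)\mu$; by \eqref{eq:ROP bound}, $\theta_{s,2s} \le \mu\sqrt{2}\,s$. Hence $\delta_{2s} + \theta_{s,2s} \le (2s-1)\mu + \sqrt{2}\,s\mu = ((2+\sqrt2)s - 1)\mu$. Wait---the form of $c_1$ in \eqref{eq:c1} has $((1+\sqrt2)s-1)\mu$ in the denominator, which suggests the relevant combination in Cand\`es--Tao is actually $\delta_s + \theta_{s,2s}$ or $\delta_{s} + \theta_{s,s} + \theta_{s,2s}$ scaled so that after substitution it collapses to $(1+\sqrt2)s\mu$ up to the $-1$; in any case the mechanical task is to insert the coherence bounds and simplify. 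Requiring the resulting upper bound to be strictly less than $1$ is exactly the condition $((1+\sqrt2)s-1)\mu < 1$, i.e. \eqref{eq:dantzig cond}. Substituting the same bounds into the constant $16/(1-\delta-\theta)^2$ and noting that $1 - \delta - \theta \ge 1 - ((1+\sqrt2)s-1)\mu$ (so the reciprocal is bounded above), one gets $C \le 2(1+\alpha)\log m \cdot \bigl(4/(1-((1+\sqrt2)s-1)\mu)\bigr)^2 = 2c_1^2(1+\alpha)s\sigma^2\log m$ after reattaching the $s\sigma^2$ factor, which is precisely \eqref{eq:ds err} with $c_1$ as in \eqref{eq:c1}. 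The probability bound \eqref{eq:dantzig prob} and the choice of $\tau$ in \eqref{eq:dantzig thresh} are inherited unchanged from \cite{candes07}, since they concern only the Gaussian tail of $\|\A^T\w\|_\infty$ and do not involve the dictionary geometry beyond the normalization $\|\a_i\|_2 = 1$.

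The main obstacle is bookkeeping rather than conceptual: one must pin down exactly which linear combination of RIP and ROP constants Cand\`es and Tao use (their hypothesis and their constant are stated in terms of $\delta_{2s}$ and $\theta_{s,2s}$, but intermediate lemmas in \cite{candes07} may reference $\delta_s$ or $\theta_{s,s}$ as well), and verify that after substituting the coherence bounds from Lemma~\ref{le:RIP ROP} everything collapses to the clean expression $((1+\sqrt2)s-1)\mu$. A minor subtlety is that the RIP bound $\delta_s \le (s-1)\mu$ is monotone in $s$, so one should use the order that actually appears in \cite{candes07}; if it is $2s$ one gets $(2s-1)\mu$ and the arithmetic with $\theta_{s,2s}\le \sqrt2\,s\mu$ must be redone, but the final form is dictated by \eqref{eq:c1} and \eqref{eq:dantzig cond}, so I would work backwards from those to confirm the match. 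Once the substitution is checked, the proof is complete, and I would simply remark that the details of the Gaussian concentration argument underlying \eqref{eq:dantzig prob} are identical to those in \cite{candes07} and are therefore omitted.
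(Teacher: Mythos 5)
Your approach is exactly the paper's: the paper gives no separate proof of Theorem~\ref{th:dantzig}, stating only that the coherence-based result follows from the RIC-based theorem of Cand\`es and Tao by applying the bounds of Lemma~\ref{le:RIP ROP}, which is precisely the reduction you carry out. Your backwards identification of the relevant combination as $\delta_s + \theta_{s,2s} \le (s-1)\mu + \sqrt{2}\,s\mu = \left((1+\sqrt{2})s-1\right)\mu$ is the right one, matching the denominator of $c_1$ in \eqref{eq:c1}.
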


This theorem is significant because it demonstrates that, while $\xds$ does not quite reach the performance of the oracle estimator, it does come within a constant factor multiplied by $\log m$, with high probability. Interestingly, the $\log m$ factor is an unavoidable result of the fact that the locations of the nonzero elements in $\x_0$ are unknown (see \cite[\S7.4]{candes06b} and the references therein). 

It is clearly of interest to determine whether results similar to Theorem~\ref{th:dantzig} can be obtained for other sparse estimation algorithms \cite{efron07, candes07b}. In this context, Bickel et al.\ \cite{bickel08} have recently shown that, with high probability, BPDN also comes within a factor of $C \log m$ of the oracle performance, for a constant $C$. In fact, their analysis is quite versatile, and simultaneously provides a result for both the Dantzig selector and BPDN\@. However, the constant $C$ obtained in this BPDN guarantee is always larger than $128$, often substantially so; this is considerably weaker than the result of Theorem~\ref{th:dantzig}. Furthermore, while the necessary conditions for the results of Bickel et al.\ are not directly comparable with those of Cand\`es and Tao, an application of Lemma~\ref{le:RIP ROP} indicates that coherence-based conditions stronger than \eqref{eq:dantzig cond} are required for the results of Bickel et al.\ to hold. 

In the following, we obtain a coherence-based performance guarantee for BPDN\@. In particular, we demonstrate that, for an appropriate choice of the regularization parameter $\gamma$, the squared error of the BPDN estimate is bounded, with high probability, by a small constant times $s \sigma^2 \log(m-s)$, and that this constant is lower than that of Theorem~\ref{th:dantzig}. We begin by stating the following somewhat more general result, whose proof is found in Appendix~\ref{ap:main}.

\begin{theorem}
\label{th:main}
Let $\x_0$ be an unknown deterministic vector with known sparsity $\|\x_0\|_0 = s$, and let $\b = \A\x_0 + \w$, where $\w \sim N(\zero, \sigma^2\I)$ is a random noise vector. Suppose that\footnote{As in \cite{tropp06}, analogous findings can also be obtained under the weaker requirement $s < 1/(2\mu)$, but the resulting expressions are somewhat more involved.}
\beq \label{eq:th cond}
s < \frac{1}{3\mu}.
\eeq
Then, with probability exceeding
\beq \label{eq:th prob}
\left( 1 - (m-s) \exp\left(-\frac{\gamma^2}{8\sigma^2}\right) \right)
\left( 1 - e^{-s/7} \right),
\eeq
the solution $\xbp$ of BPDN \eqref{eq:bpdn} is unique and satisfies
\beq \label{eq:th err}
\|\x_0 - \xbp\|_2^2 \le \left( \sigma \sqrt{3} + \tfrac{3}{2}\gamma \right)^2 s.
\eeq
\end{theorem}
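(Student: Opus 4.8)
The plan is to follow the standard two-part strategy for BPDN analysis: first establish a deterministic "stability under small residual correlation" statement (a result that holds for any noise $\w$ such that $\|\A^T\w\|_\infty$ is below some threshold), and then control $\|\A^T\w\|_\infty$ probabilistically using Gaussian tail bounds together with a lower bound on the least singular value of $\ALz$. The two probabilistic ingredients correspond exactly to the two factors in \eqref{eq:th prob}: the factor $1-(m-s)\exp(-\gamma^2/(8\sigma^2))$ will come from a union bound over the $m-s$ off-support atoms for the event $\{|\a_i^T\w| \le \gamma/2\}$, and the factor $1-e^{-s/7}$ will come from a concentration bound (a chi-square type tail, cf.\ \cite{candes07}) ensuring $\|\PSp\w\|_2$ — the norm of the noise projected onto $\spn(\ALz)$ — does not exceed roughly $\sigma\sqrt{s}$ times a mild constant (the $1/7$ and the constant $\sqrt 3$ in \eqref{eq:th err} strongly suggest this).

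First I would set up the deterministic core. Condition on the event $\|\A^T\w\|_\infty \le \gamma/2$. Using an argument in the spirit of Tropp \cite{tropp06} (and Theorem~\ref{th:adversarial} above), one shows under $s<1/(3\mu)$ that the BPDN minimizer $\xbp$ is unique and supported on $\Lz$: the idea is to solve the restricted problem on $\Lz$, verify via the KKT/subgradient optimality conditions that the restricted solution is in fact globally optimal (this needs $\|\A_{\Lz^c}^T(\b-\A\xbp)\|_\infty < \gamma$, which the choice $\gamma/2$ margin plus an ERC-type bound from $s<1/(3\mu)$ guarantees), and that strict convexity on the support gives uniqueness. Once the support is pinned down, on $\Lz$ we have $\xbp = \ALz^\pinv \b - \gamma (\ALz^T\ALz)^{-1}\,\mathrm{sgn}(\xbp_{\Lz})$, so
\beq
\x_0 - \xbp = -\, (\ALz^T\ALz)^{-1}\ALz^T\w + \gamma (\ALz^T\ALz)^{-1}\mathrm{sgn}(\xbp_{\Lz}).
\eeq
The first term is exactly the oracle error $\x_0 - \xoL$, whose squared norm is $\|(\ALz^T\ALz)^{-1}\ALz^T\w\|_2^2 \le \|(\ALz^T\ALz)^{-1}\|_2 \cdot \|\PSp\w\|_2^2$; bounding the smallest eigenvalue of $\ALz^T\ALz$ below by $1-(s-1)\mu > 2/3$ via Gershgorin (as already noted in the text after \eqref{eq:crb}) and using the chi-square concentration of $\|\PSp\w\|_2^2 \approx s\sigma^2$ yields a term of size $\sigma\sqrt{3}\,\sqrt{s}$ (up to the event in the second factor). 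The second term has norm at most $\gamma\|(\ALz^T\ALz)^{-1}\|_2\|\mathrm{sgn}(\xbp_{\Lz})\|_2 \le \tfrac{3}{2}\gamma\sqrt{s}$. Adding these two bounds by the triangle inequality gives $\|\x_0-\xbp\|_2 \le (\sigma\sqrt 3 + \tfrac32\gamma)\sqrt s$, which is \eqref{eq:th err}.

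The main obstacle, I expect, is the deterministic support-recovery / uniqueness step: getting the constant in the sparsity condition down to the clean $s < 1/(3\mu)$ while still carrying an explicit $\gamma/2$ slack in the dual feasibility condition requires a careful accounting of the coherence bounds $\|(\ALz^T\ALz)^{-1}\|_{1\to 1}$, $\|\ALz^T\a_i\|_1$ for $i\notin\Lz$, and the interaction term $\|\A_{\Lz^c}^T\ALz(\ALz^T\ALz)^{-1}\|_{\infty\to\infty}$, all in operator norms induced by $\ell_\infty$; this is exactly the ERC-style bookkeeping of \cite{tropp06} and is where the "weaker requirement $s<1/(2\mu)$" footnote originates. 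A secondary technical point is pinning down the precise constants in the chi-square tail so that the exponent comes out as $s/7$ and the leading constant as $\sqrt 3$; I would handle this with a standard Laurent–Massart / Hoeffding-type bound on $\chi^2_s$ deviations and then optimize the split between "how far above $s\sigma^2$ we allow $\|\PSp\w\|_2^2$" and "how small we need the failure probability." The $\log(m-s)$ scaling advertised in the text is not in the theorem statement itself but is recovered afterward by plugging $\gamma = c\,\sigma\sqrt{\log(m-s)}$ into \eqref{eq:th prob} and \eqref{eq:th err}.
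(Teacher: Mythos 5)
Your outline reproduces the paper's argument in all essentials: the error is split through the oracle estimator, Tropp's deterministic analysis (under $s<1/(3\mu)$ and a $\gamma/2$ bound on the relevant correlations) gives uniqueness, support containment and the $\tfrac{3}{2}\gamma\sqrt{s}$ term, and the oracle error is controlled by Gershgorin ($\lambda_{\min}(\ALz^T\ALz)\ge 2/3$) combined with a $\chi^2_s$ tail, producing the $\sigma\sqrt{3}\,\sqrt{s}$ term and the $1-e^{-s/7}$ factor. Your explicit closed form for $\xbp$ on $\Lz$ is a perfectly good substitute for quoting the bound $\|\xbp-\xo\|_\infty\le\tfrac{3}{2}\gamma$ directly, and yields the same $\ell_2$ estimate.

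There is, however, one genuine gap: you never justify the \emph{product} form of the probability \eqref{eq:th prob}. Intersecting your two events by a union bound gives only $1-(m-s)\exp(-\gamma^2/(8\sigma^2))-\Pr{\chi^2_s>2s}$, which is strictly smaller than the product of the two factors, so it does not prove the theorem as stated. The paper obtains the product because the two events are statistically \emph{independent}, and this is precisely why the correlation event must be phrased in terms of the projected noise, $\max_i |\a_i^T(\I-\ALz\ALz^\pinv)\w|\le\tfrac{1}{2}\gamma$, rather than your $\|\A^T\w\|_\infty\le\tfrac{1}{2}\gamma$: the oracle error depends on $\w$ only through $\ALz^\pinv\w$, which is uncorrelated with---and hence, by joint Gaussianity, independent of---$(\I-\ALz\ALz^\pinv)\w$; the raw correlations $\a_i^T\w$ for $i\notin\Lz$ generally retain a component along $\spn(\ALz)$ and are therefore \emph{not} independent of the oracle error. (The projected form is also the one actually appearing in Tropp's dual-feasibility condition, which concerns the residual after the on-support fit.) Replacing your conditioning event by the projected-noise event repairs both points simultaneously; the rest of your outline then goes through.
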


To compare the results for BPDN and the Dantzig selector, we now derive from Theorem~\ref{th:main} a result which holds with a probability on the order of \eqref{eq:dantzig prob}. Observe that in order for \eqref{eq:th prob} to be a high probability, we require $\exp(-\gamma^2/(8\sigma^2))$ to be substantially smaller than $1/(m-s)$. This requirement can be used to select a value for the regularization parameter $\gamma$. In particular, one requires $\gamma$ to be at least on the order of $\sqrt{8 \sigma^2 \log(m-s)}$. However, $\gamma$ should not be much larger than this value, as this will increase the error bound \eqref{eq:th err}. We propose to use
\beq \label{eq:gamma best}
\gamma = \sqrt{8 \sigma^2 (1+\alpha) \log(m-s)}
\eeq
for some fairly small $\alpha>0$. Substituting this value of $\gamma$ into Theorem~\ref{th:main} yields the following result.

\begin{corollary} \label{co:1}
Under the conditions of Theorem~\ref{th:main}, let $\xbp$ be a solution of BPDN \eqref{eq:bpdn} with $\gamma$ given by \eqref{eq:gamma best}. Then, with probability exceeding
\beq \label{eq:co prob}
\left( 1 - \frac{1}{(m-s)^\alpha} \right) \left( 1-e^{-s/7} \right)
\eeq
we have
\beq \label{eq:co err}
\|\x_0 - \xbp\|_2^2 \le
\left( \sqrt{3} + 3\sqrt{2(1+\alpha)\log(m-s)} \right)^2 s\sigma^2.
\eeq
\end{corollary}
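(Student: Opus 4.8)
The plan is to specialize Theorem~\ref{th:main} to the regularization parameter $\gamma$ prescribed in \eqref{eq:gamma best}; since that choice of $\gamma$ is exactly engineered to balance the two factors in \eqref{eq:th prob}, no new probabilistic argument is required and the whole proof reduces to substituting \eqref{eq:gamma best} into the two conclusions of Theorem~\ref{th:main} and simplifying.

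First I would treat the probability bound. Writing $\gamma^2 = 8\sigma^2(1+\alpha)\log(m-s)$ gives $\gamma^2/(8\sigma^2) = (1+\alpha)\log(m-s)$, so that $\exp(-\gamma^2/(8\sigma^2)) = (m-s)^{-(1+\alpha)}$; multiplying by the factor $(m-s)$ appearing in \eqref{eq:th prob} collapses this term to $(m-s)^{-\alpha}$. Hence the first parenthesized factor of \eqref{eq:th prob} becomes $1 - (m-s)^{-\alpha}$ while the second, $1 - e^{-s/7}$, is left untouched, reproducing \eqref{eq:co prob}. Then I would treat the error bound: rewriting the chosen $\gamma$ as $2\sqrt{2}\,\sigma\sqrt{(1+\alpha)\log(m-s)}$ yields $\tfrac{3}{2}\gamma = 3\sigma\sqrt{2(1+\alpha)\log(m-s)}$, and inserting this into \eqref{eq:th err} gives $\|\x_0 - \xbp\|_2^2 \le \bigl(\sqrt{3} + 3\sqrt{2(1+\alpha)\log(m-s)}\bigr)^2 s\sigma^2$ after pulling $\sigma$ out of the square, which is exactly \eqref{eq:co err}. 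Uniqueness of $\xbp$ carries over verbatim from Theorem~\ref{th:main}.

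There is no genuine obstacle here --- every step is a direct substitution --- so the only thing to be careful about is verifying that the prescribed $\gamma$ is indeed large enough to make \eqref{eq:th prob} a nontrivial probability, i.e.\ that $(m-s)\exp(-\gamma^2/(8\sigma^2)) < 1$; this is precisely the content of the identity $(m-s)\exp(-\gamma^2/(8\sigma^2)) = (m-s)^{-\alpha}$ for $\alpha>0$. It is worth noting in passing that the bound degrades gracefully as $\alpha \downarrow 0$, which corresponds to the smallest admissible regularization level and exhibits the trade-off between the confidence \eqref{eq:co prob} and the error guarantee \eqref{eq:co err}.
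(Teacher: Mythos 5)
Your proposal is correct and matches the paper exactly: the paper obtains Corollary~\ref{co:1} by the same direct substitution of \eqref{eq:gamma best} into the probability \eqref{eq:th prob} and the error bound \eqref{eq:th err} of Theorem~\ref{th:main}, and your algebra ($\gamma^2/(8\sigma^2)=(1+\alpha)\log(m-s)$ and $\tfrac{3}{2}\gamma=3\sigma\sqrt{2(1+\alpha)\log(m-s)}$) checks out. Nothing further is needed.
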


Let us examine the probability \eqref{eq:co prob} with which Corollary~\ref{co:1} holds, to verify that it is indeed roughly equal to \eqref{eq:dantzig prob}. The expression \eqref{eq:co prob} consists of a product of two terms, both of which converge to $1$ as the problem dimensions increase. The right-hand term may seem odd because it appears to favor non-sparse signals; however, this is an artifact of the method of proof, which requires a sufficient number of nonzero coefficients for large number approximations to hold. This right-hand term converges to $1$ exponentially and therefore typically has a negligible effect on the overall probability of success; for example, for $s \ge 50$ this term is larger than $0.999$.

The left-hand term in \eqref{eq:co prob} tends to $1$ polynomially as $m-s$ increases. This is a slightly lower rate than the probability \eqref{eq:dantzig prob} with which the Dantzig selector bound holds; however, this difference is compensated for by a correspondingly lower multiplicative factor of $\log(m-s)$ in the BPDN error bound \eqref{eq:co err}, as opposed to the $\log m$ factor in the Dantzig selector. In any case, for both theorems to hold, $m$ must increase much more quickly than $s$, so that these differences are negligible.

For large $s$ and $m-s$, Corollary~\ref{co:1} ensures that, with high probability, $\|\xbp-\x_0\|_2^2$ is no larger than a constant multiplied by $s\sigma^2 \log(m-s)$. Up to a multiplicative constant, this error bound is essentially identical to the result \eqref{eq:ds err} for the Dantzig selector. As we have seen, the probabilities with which these bounds hold are likewise almost identical. However, the constants involved in the BPDN, as demonstrated by Corollary~\ref{co:1}, are substantially lower than those previously known for the Dantzig selector. To see this, consider a situation in which $s = 1/(4\mu)$. In this case, for large $s$, the bound \eqref{eq:ds err} on the Dantzig selector rapidly converges to
\beq \label{eq:dantzig cons}
\|\x_0 - \xds\|_2^2 \le 203.6 (1+\alpha) \cdot \log m \cdot s \sigma^2.
\eeq
By comparison, the performance of BPDN in the same setting, as bounded by Corollary~\ref{co:1}, is
\beq \label{eq:co1 cons}
\|\x_0 - \xbp\|_2^2 \le 18 (1+\alpha) \cdot \log(m-s) \cdot s \sigma^2
\eeq
which is over 10 times lower. This improvement is not merely a result of the particular choice of $s$ or $\mu$. Indeed, the multiplicative factor of $18$ which appeared in the BPDN bound \eqref{eq:co1 cons} holds for large $s$ with any value of $\mu$, as long as $s < 1/(3\mu)$; whereas it can be seen from \eqref{eq:ds err}--\eqref{eq:c1} that the multiplicative factor of the Dantzig selector is always larger than $32$. Further comparison between these guarantees will be presented in Section~\ref{se:numer}.

\subsection{Greedy Approaches}

The performance guarantees obtained for the $\ell_1$-relaxation techniques required only the assumption that $\x_0$ is sufficiently sparse. By contrast, for greedy algorithms, successful estimation can only be guaranteed if one further assumes that all nonzero components of $\x_0$ are somewhat larger than the noise level. The reason is that greedy techniques are based on a LS solution for an estimated support, an approach whose efficacy is poor unless the support is correctly identified. Indeed, when using the LS technique \eqref{eq:lsest}, even a single incorrectly identified support element may cause the entire estimate to be severely incorrect. To ensure support recovery, all nonzero elements must be large enough to overcome the noise.

To formalize this notion, denote $\x_0 = (x_{0,1}, \ldots, x_{0,m})^T$ and define
\begin{align} \label{eq:def xmin xmax}
\xmin &= \min_{i \in \Lz} |x_{0,i}|, \notag\\
\xmax &= \max_{i \in \Lz} |x_{0,i}|.
\end{align}
A performance guarantee for both OMP and the thresholding algorithm is then given by the following theorem, whose proof can be found in Appendix~\ref{ap:omp}.

\begin{theorem} \label{th:omp}
Let $\x_0$ be an unknown deterministic vector with known sparsity $\|\x_0\|_0 = s$, and let $\b = \A\x_0 + \w$, where $\w \sim N(\zero, \sigma^2\I)$ is a random noise vector. Suppose that
\beq \label{eq:omp given}
\xmin - (2s-1)\mu\xmin \ge 2\sigma \sqrt{2(1+\alpha)\log m}
\eeq
for some constant $\alpha > 0$. Then, with probability at least
\beq \label{eq:omp prob}
1 - \frac{1}{m^\alpha \sqrt{\pi (1+\alpha) \log m}} ,
\eeq
the OMP estimate $\xomp$ identifies the correct support $\Lz$ of $\x_0$ and, furthermore, satisfies
\begin{subequations}\label{eq:omp err}
\begin{align}
\left\|\xomp - \x_0\right\|_2^2
&\le \frac{2(1+\alpha)}{(1-(s-1)\mu)^2} s \sigma^2 \log m  \label{eq:omp err strong} \\
&\le 8 (1+\alpha) s \sigma^2 \log m. \label{eq:omp err simpl}
\end{align}
\end{subequations}
If the stronger condition
\beq \label{eq:th given}
\xmin - (2s-1)\mu\xmax \ge 2\sigma \sqrt{2(1+\alpha)\log m}
\eeq
holds, then with probability exceeding \eqref{eq:omp prob}, the thresholding algorithm also correctly identifies $\Lz$ and satisfies \eqref{eq:omp err}.
\end{theorem}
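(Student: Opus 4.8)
The plan is to move all the randomness into one favorable event and then argue deterministically. Put $t = \sigma\sqrt{2(1+\alpha)\log m}$ and let $\mathcal{E} = \{\,\|\A^T\w\|_\infty \le t\,\}$, noting that the right-hand sides of \eqref{eq:omp given} and \eqref{eq:th given} are exactly $2t$. Since the atoms are normalized, $\a_i^T\w \sim N(0,\sigma^2)$ for every $i$, so the Gaussian tail bound $2\Qfunc{u}\le\sqrt{2/\pi}\,u^{-1}e^{-u^2/2}$ with $u=\sqrt{2(1+\alpha)\log m}$, together with a union bound over the $m$ atoms, gives $\Pr{\mathcal{E}^c}\le m\cdot\sqrt{2/\pi}\,(2(1+\alpha)\log m)^{-1/2}\,m^{-(1+\alpha)}$, which simplifies to exactly $1/(m^\alpha\sqrt{\pi(1+\alpha)\log m})$, i.e.\ \eqref{eq:omp prob}. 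The same event serves for both OMP and thresholding, so both conclusions will hold with probability at least \eqref{eq:omp prob}; all of the following is carried out on $\mathcal{E}$.

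I would dispose of thresholding first, since it is a one-step correlation estimate. For $j\in\Lz$ one has $|\a_j^T\b|\ge|x_{0,j}| - \sum_{k\in\Lz\setminus\{j\}}|x_{0,k}|\,|\a_j^T\a_k| - |\a_j^T\w| \ge \xmin - (s-1)\mu\xmax - t$, while for $j\notin\Lz$, $|\a_j^T\b|\le\sum_{k\in\Lz}|x_{0,k}|\,|\a_j^T\a_k| + |\a_j^T\w| \le s\mu\xmax + t$. Under \eqref{eq:th given} the former strictly exceeds the latter, so the $s$ atoms most strongly correlated with $\b$ are precisely those indexed by $\Lz$, and the thresholding estimate is computed on the correct support.

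The technical heart is the OMP assertion, which I would establish by induction on the iteration: if $\Lambda^{i-1}\subseteq\Lz$ after $i-1<s$ iterations, then the $i$-th selected atom also belongs to $\Lz$. Write $\M = \I - \A_{\Lambda^{i-1}}\A_{\Lambda^{i-1}}^\pinv$ for the projector onto the orthogonal complement of $\spn(\A_{\Lambda^{i-1}})$; the crucial step is to express the residual in the cancellation-free form $\r^{i-1} = \M\A\x_0 + \M\w = \ALz\mathbf{c} + \M\w$, where $\mathbf{c}$ is a \emph{deterministic}, noise-free vector supported on $\Lz$ whose restriction to $\Lz\setminus\Lambda^{i-1}$ equals that of $\x_0$ (hence its nonzero entries there have modulus at least $\xmin$), and $\M\w$ is the projected noise. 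Since $\a_j^T\r^{i-1}=0$ for $j\in\Lambda^{i-1}$, it suffices to show $\max_{j\in\Lz}|\a_j^T\r^{i-1}| > \max_{j\notin\Lz}|\a_j^T\r^{i-1}|$. For the signal part $\ALz\mathbf{c}\in\Ra{\ALz}$ I would invoke the exact-recovery-type estimate $\max_{j\notin\Lz}|\a_j^T\ALz\mathbf{c}|\le\eta\,\max_{j\in\Lz}|\a_j^T\ALz\mathbf{c}|$ with $\eta:=\max_{j\notin\Lz}\|\ALz^\pinv\a_j\|_1$, together with the coherence bound $\eta\le s\mu/(1-(s-1)\mu)$ — which rests on the Gershgorin estimate $\lambda_{\min}(\ALz^T\ALz)\ge1-(s-1)\mu$ already used for \eqref{eq:crb} — so that $1-\eta\ge(1-(2s-1)\mu)/(1-(s-1)\mu)$; a matching lower bound on $\max_{j\in\Lz}|\a_j^T\ALz\mathbf{c}|$, close to $\xmin(1-(s-1)\mu)$, comes from isolating the diagonal entry and bounding $\|\mathbf{c}\|_1$ via the coherence. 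This is how the quantity $(2s-1)\mu$ in \eqref{eq:omp given} enters. The noise terms on both sides are controlled on $\mathcal{E}$ by writing $\M\w=\w-\A_{\Lambda^{i-1}}\A_{\Lambda^{i-1}}^\pinv\w$ and bounding $\|\A_{\Lambda^{i-1}}^\pinv\a_j\|_1$ via the coherence; the fact that every column index lies in $\Lz$ or its complement contributes the factor $2$ multiplying $t$. The base case $i=1$ (where $\M=\I$) is the transparent version of this computation and already uses the full strength of \eqref{eq:omp given}. I expect the main obstacle to be exactly this bookkeeping in the inductive step: keeping all the coherence-based estimates on $\A_{\Lambda^{i-1}}^\pinv$ — which stay bounded precisely because $s<1/(3\mu)$ — sharp enough that the accumulated cross-correlation and projected-noise terms never catch up to the signal gap $\xmin(1-(2s-1)\mu)\ge 2t$ at any of the $s$ iterations.

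Once the induction closes, after $s$ iterations $\Lambda^s=\Lz$ (OMP never reselects an atom), so $\xomp$, and likewise $\xth$ in the thresholding case, coincides with the oracle estimator $\xo=\ALz^\pinv\b$ on $\Lz$ and vanishes off $\Lz$. Thus $\xomp-\x_0$ is supported on $\Lz$ and equals $\ALz^\pinv\w=(\ALz^T\ALz)^{-1}\ALz^T\w$ there, so on $\mathcal{E}$
\beq
\|\xomp-\x_0\|_2^2 \le \|(\ALz^T\ALz)^{-1}\|_2^2\,\|\ALz^T\w\|_2^2 \le \frac{1}{(1-(s-1)\mu)^2}\,s\,t^2 = \frac{2(1+\alpha)}{(1-(s-1)\mu)^2}\,s\sigma^2\log m ,
\eeq
using Gershgorin for the spectral norm of $(\ALz^T\ALz)^{-1}$ and $\|\ALz^T\w\|_2^2=\sum_{j\in\Lz}(\a_j^T\w)^2\le s\|\A^T\w\|_\infty^2\le s\,t^2$; this is \eqref{eq:omp err strong}. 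Finally, $s<1/(3\mu)$ forces $(s-1)\mu<1/3$, hence $(1-(s-1)\mu)^{-2}<9/4$, so the right-hand side is at most $\tfrac{9}{2}(1+\alpha)s\sigma^2\log m\le 8(1+\alpha)s\sigma^2\log m$, which is \eqref{eq:omp err simpl}.
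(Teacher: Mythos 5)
Your event $\mathcal{E}=\{\|\A^T\w\|_\infty\le t\}$, the probability computation (your union bound gives numerically the same $1-m\eta$ that the paper gets via \v{S}id\'ak's lemma), the thresholding argument, and the final oracle-error bound via Gershgorin all match the paper's proof and are correct. (Two small remarks: the paper needs strict inequality in the event to rule out ties in the correlation comparison, and the passage to \eqref{eq:omp err simpl} should not invoke $s<1/(3\mu)$ --- that is a hypothesis of Theorem~\ref{th:main}, not of Theorem~\ref{th:omp}; instead, $(s-1)\mu<1/2$ follows from \eqref{eq:omp given} being satisfiable, which forces $(2s-1)\mu<1$.)

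The genuine gap is the OMP induction, which you decompose as $\r = \M\A\x_0+\M\w$ with $\M=\I-\A_{\Lambda^{i-1}}\A_{\Lambda^{i-1}}^\pinv$ and leave as ``bookkeeping.'' That bookkeeping does not close with the theorem's constants. Your ERC-type signal estimate yields a guaranteed gap of only $(1-\eta)\max_{j\in\Lz}|\a_j^T\ALz\mathbf{c}|\ge\xmin(1-(2s-1)\mu)\ge 2t$, which must dominate the projected-noise contributions on \emph{both} sides of the comparison. But on $\mathcal{E}$ one only gets $|\a_j^T\M\w|\le|\a_j^T\w|+|(\A_{\Lambda^{i-1}}^\pinv\a_j)^T\A_{\Lambda^{i-1}}^T\w|\le t\bigl(1+\tfrac{(i-1)\mu}{1-(i-2)\mu}\bigr)$, i.e.\ strictly more than $t$ per side, so the two noise terms can total up to roughly $3t$ against a gap of exactly $2t$ at the boundary of \eqref{eq:omp given}; trying instead to control $(\M\a_j)^T\w$ probabilistically fails because $\M$ depends on $\w$ through the selected support, forcing a union bound over supports. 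The paper's proof avoids projecting the noise altogether: it writes $\r^i=\A(\x_0-\xomp^{i-1})+\w$, observes that $\tilde\x^{i-1}=\x_0-\xomp^{i-1}$ is supported inside $\Lz$ with at least one entry equal to an entry of $\x_0$ (hence of magnitude $\ge\xmin$), and reapplies the one-step correlation lemma (Lemma~\ref{le:xmax}, in its ``$\xmax$'' form \eqref{eq:xmax cond}) to this surrogate model with the \emph{same unprojected} noise $\w$, so the noise budget stays at $t$ per side and the $2t$ gap suffices. That reformulation of the residual is the missing idea; without it your inductive step cannot be completed under hypothesis \eqref{eq:omp given} as stated.
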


The performance guarantee \eqref{eq:omp err} is better than that provided by Theorem~\ref{th:dantzig} and~Corollary~\ref{co:1}. However, this result comes at the expense of requirements on the magnitude of the entries of $\x_0$. Our analysis thus suggests that greedy approaches may outperform $\ell_1$-based methods when the entries of $\x_0$ are large compared with the noise, but that the greedy approaches will deteriorate when the noise level increases. As we will see in Section~\ref{se:numer}, simulations also appear to support this conclusion.

It is interesting to compare the success conditions \eqref{eq:omp given} and \eqref{eq:th given} of the OMP and thresholding algorithms. For given problem dimensions, the OMP algorithm requires $\xmin$, the smallest nonzero element of $\x_0$, to be larger than a constant multiple of the noise standard deviation $\sigma$. This is required in order to ensure that all elements of the support of $\x_0$ will be identified with high probability. The requirement of the thresholding algorithm is stronger, as befits a simpler approach: In this case $\xmin$ must be larger than the noise standard deviation plus a constant times $\xmax$. In other words, one must be able to separate $\xmin$ from the combined effect of noise and interference caused by the other nonzero components of $\x_0$. This results from the thresholding technique, in which the entire support is identified simultaneously from the measurements. By comparison, the iterative approach used by OMP identifies and removes the large elements in $\x_0$ first, thus facilitating the identification of the smaller elements in later iterations.

\section{Numerical Results}
\label{se:numer}

\begin{figure*}
\centerline{\subfigure[Dantzig selector]{\includegraphics{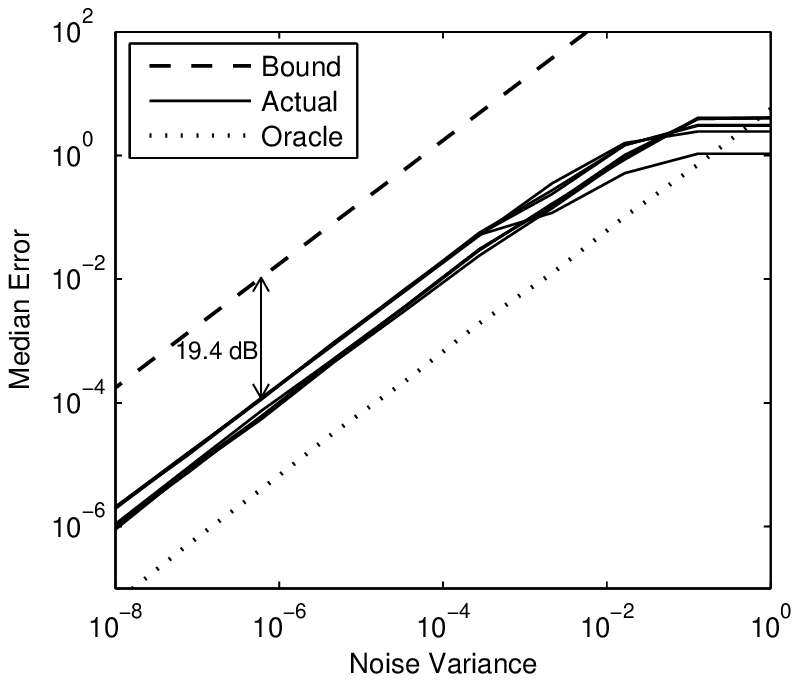}}
\hfil
\subfigure[BPDN]{\includegraphics{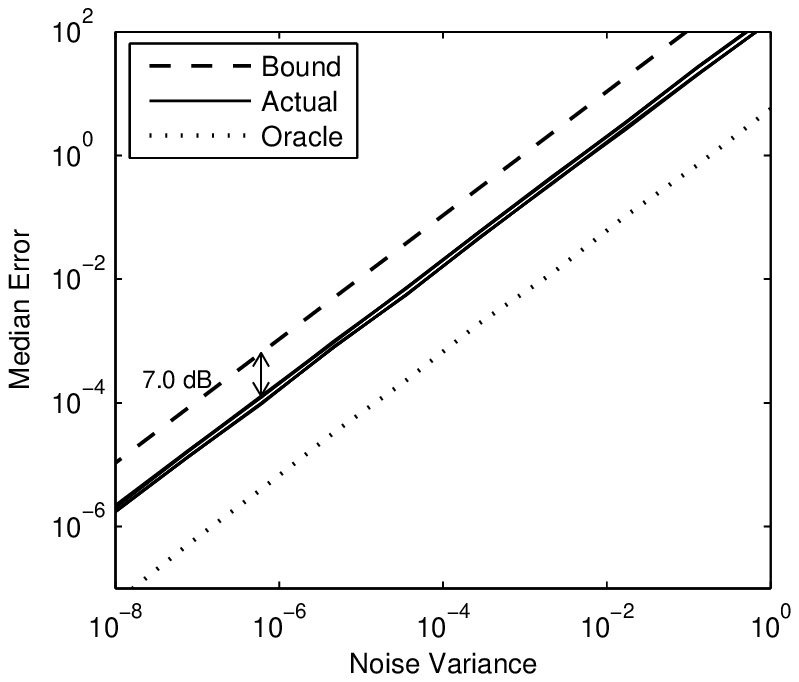}}}
\centerline{\subfigure[OMP]{\includegraphics{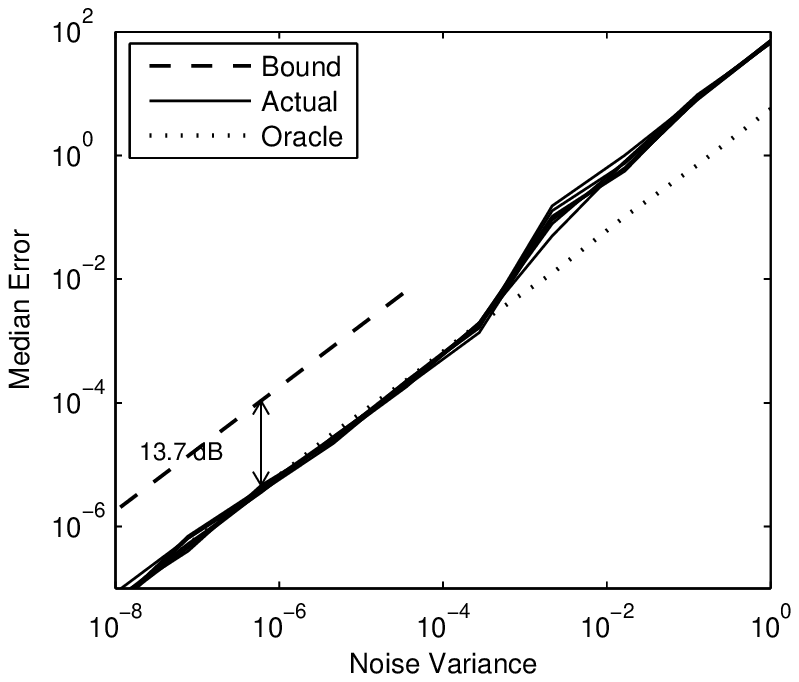}}
\hfil
\subfigure[Thresholding]{\includegraphics{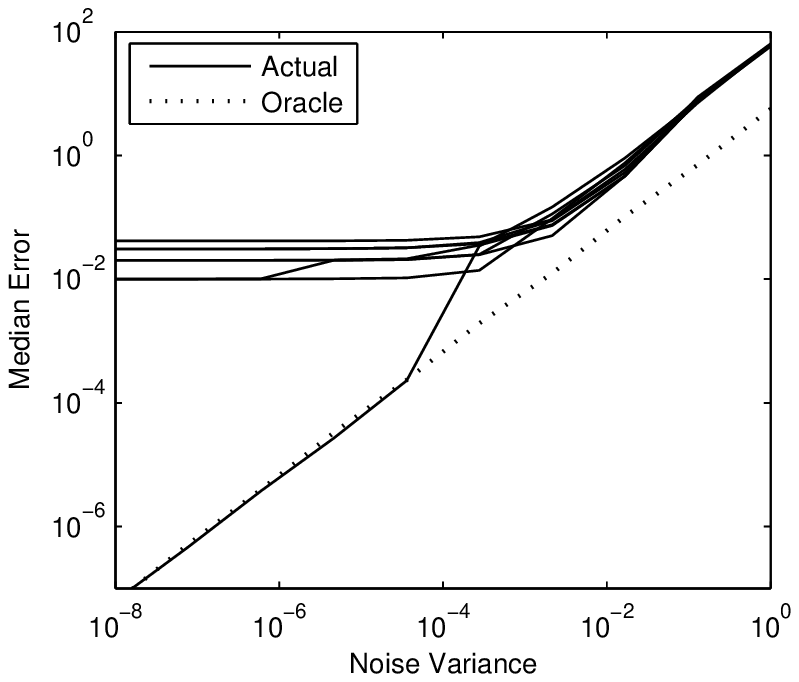}}}
\caption{Median estimation error for practical estimators (solid line) compared with the performance guarantees (dashed line) and the oracle estimator (dotted line). The solid lines report performance for $8$ different values of the unknown parameter vector $\x_0$. For OMP, performance is only guaranteed for $\sigma \le 0.057$, while for thresholding, nothing can be guaranteed for the given problem dimensions.}
\label{fi:median}
\end{figure*}

In this section, we describe a number of numerical experiments comparing the performance of various estimators to the guarantees of Section~\ref{se:random}. Our first experiment measured the median estimation error, i.e., the median of the $\ell_2$ distance between $\x_0$ and its estimate. The median error is intuitively appealing as it characterizes the ``typical'' estimation error, and it can be readily bounded by the performance guarantees of Section~\ref{se:random}.

Specifically, we chose the two-ortho dictionary $\A = [\I \ \ \H]$, where $\I$ is the $512 \times 512$ identity matrix and $\H$ is the $512 \times 512$ Hadamard matrix with normalized columns. The RICs of this dictionary are unknown, but the coherence can be readily calculated and is given by $\mu = 1/\sqrt{512}$. Consequently, the theorems of Section~\ref{se:random} can be used to obtain performance guarantees for sufficiently sparse vectors. In particular, in our simulations we chose parameters $\x_0$ having a support of size $s=7$. The smallest nonzero entry in $\x_0$ was $\xmin = 0.1$ and the largest entry was $\xmax = 1$. Under these conditions, applying the theorems of Section~\ref{se:random} yields the bounds\footnote{In the current setting, the results for the Dantzig selector (Theorem~\ref{th:dantzig}) and OMP (Theorem~\ref{th:omp}) can only be used to yield guarantees holding with probabilities of approximately $3/4$ and higher. These are, of course, also bounds on the median error.}
\begin{align} \label{eq:guar}
\|\x_0 - \xomp\|_2^2 \le   3.7 &s \sigma^2 \log m & \text{w.p. }&\tfrac 3 4 ,  \text{ if } \sigma \le 0.057; \notag\\
\|\x_0 - \xbp\|_2^2  \le  22.1 &s \sigma^2 \log m & \text{w.p. }&\tfrac 1 2 ; \notag\\
\|\x_0 - \xds\|_2^2  \le 361.8 &s \sigma^2 \log m & \text{w.p. }&\tfrac 3 4 .
\end{align}
We have thus obtained guarantees for the median estimation error of the Dantzig selector, BPDN, and OMP\@. Under these settings, no guarantee can be made for the performance of the thresholding algorithm. Indeed, as we will see, for some choices of $\x_0$ satisfying the above requirements, the performance of the thresholding algorithm is not proportional to $s \sigma^2 \log m$. To obtain thresholding guarantees, one requires a narrower range between $\xmin$ and $\xmax$.

To measure the actual median error obtained by various estimators, $8$ different parameter vectors $\x_0$ were selected. These differed in the distribution of the magnitudes of the nonzero components within the range $[\xmin, \xmax]$ and in the locations of the nonzero elements. For each parameter $\x_0$, a set of measurement vectors $\b$ were obtained from \eqref{eq:b=Ax+w} by adding white Gaussian noise. The estimation algorithms of Section~\ref{ss:techniques} were then applied to each measurement realization; for the Dantzig selector and BPDN, the parameters $\tau$ and $\gamma$ were chosen as the smallest values such that the probabilities of success \eqref{eq:dantzig prob} and \eqref{eq:co prob}, respectively, would exceed $1/2$. The median over noise realizations of the distance $\|\x_0-\hx\|_2^2$ was then computed for each estimator. This process was repeated for $10$ values of the noise variance $\sigma^2$ in the range $10^{-8} \le \sigma^2 \le 1$. The results are plotted in Fig.~\ref{fi:median} as a function of $\sigma^2$.

It is evident from Fig.~\ref{fi:median} that some parameter vectors are more difficult to estimate than others. Indeed, there is a large variety of parameters $\x_0$ satisfying the problem requirements, and it is likely that some of them come closer to the theoretical limits than the parameters chosen in our experiment. This highlights the importance of performance guarantees in ensuring adequate performance for \emph{all} parameter values. On the other hand, it is quite possible that further improvements of the constants in the performance bounds are possible. For example, the Dantzig selector guarantee, which is obtained by applying coherence bounds to RIC-based results \cite{candes07}, is almost $100$ times higher than the worst of the examined parameter values. It should also be noted that applying coherence bounds to RIC-based BPDN guarantees \cite{bickel08} yields a bound which applies to the aforementioned matrix $\A$ only when $s \le 3$, and thus cannot be used in the present setting. Therefore, it appears that when dealing with dictionaries for which only the coherence $\mu$ is known, guarantees based directly on $\mu$ are tighter than RIC-based results.

In practice, it is more common to measure the MSE of an estimator than its median error. Our next goal is to determine whether the behavior predicted by our theoretical analysis is also manifested in the MSE of the various estimators. To this end, we conducted an experiment in which the MSEs of the estimators of Section~\ref{ss:techniques} were compared. In this simulation, we chose the two-ortho dictionary $\A = [\I \ \ \H]$, where $\I$ is the $256 \times 256$ identity matrix and $\H$ is the $256 \times 256$ Hadamard matrix with normalized columns.\footnote{Similar experiments were performed on a variety of other dictionaries, including an overcomplete DCT \cite{elad06} and a matrix containing Gaussian random entries. The different dictionaries yielded comparable results, which are not reported here.} Once again, the RICs of this dictionary are unknown. However, the coherence in this case is given by $\mu = 1/16$, and consequently, the $\ell_1$ relaxation guarantees of Section~\ref{ss:l1} hold for $s \le 5$.

\begin{figure}
\centerline{\includegraphics{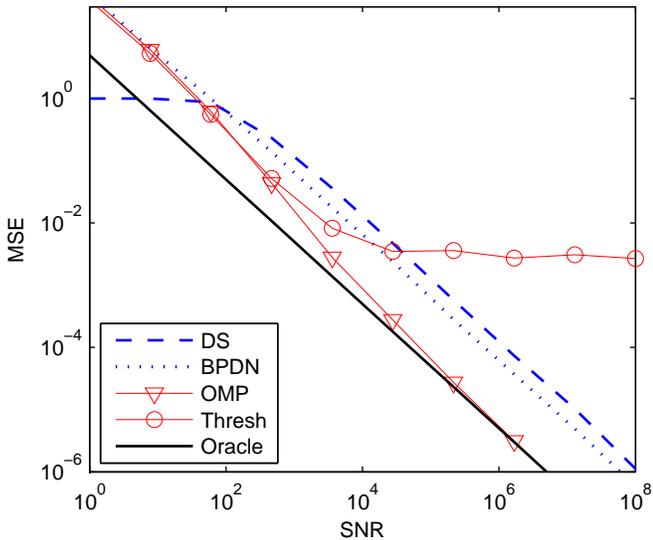}}
\caption{MSE of various estimators as a function of the SNR\@. The sparsity level is $s=5$ and the dictionary is a $256 \times 512$ two-ortho matrix.}
\label{fi:sim1}
\end{figure}

We obtained the parameter vector $\x_0$ for this experiment by selecting a 5-element support at random, choosing the nonzero entries from a white Gaussian distribution, and then normalizing the resulting vector so that $\|\x_0\|_2 = 1$. The regularization parameters $\tau$ and $\gamma$ of the Dantzig selector and BPDN were chosen as recommended by Theorem~\ref{th:dantzig} and Corollary~\ref{co:1}, respectively; for both estimators a value of $\alpha=1$ was chosen, so that the guaranteed probability of success for the two algorithms has the same order of magnitude. The MSE of each estimate was then calculated by averaging over repeated realizations of $\x_0$ and the noise. The experiment was conducted for 10 values of the noise variance $\sigma^2$ and the results are plotted in Fig.~\ref{fi:sim1} as a function of the signal-to-noise ratio (SNR), which is defined by
\beq \label{eq:def snr}
\text{SNR} = \frac{\|\x_0\|_2^2}{n \sigma^2} = \frac{1}{n \sigma^2}.
\eeq

To compare this plot with the theoretical results of Section~\ref{se:random}, observe first the situation at high SNR\@. In this case, OMP, BPDN, and the Dantzig selector all achieve performance which is proportional to the oracle MSE (or CRB) given by \eqref{eq:crb}. Among these, OMP is closest to the CRB, followed by BPDN and, finally, the Dantzig selector. This behavior matches the proportionality constants given in the theorems of Section~\ref{se:random}. Indeed, for small $\sigma$, the condition \eqref{eq:omp given} holds even for large $\alpha$, and thus Theorem~\ref{th:omp} guarantees that OMP will recover the correct support of $\x_0$ with high probability, explaining the convergence of this estimator to the oracle. By contrast, the performance of the thresholding algorithm levels off at high SNR; this is again predicted by Theorem~\ref{th:omp}, since, even when $\sigma=0$, the condition \eqref{eq:th given} does not always hold, unless $\xmin$ is not much smaller than $\xmax$. Thus, for our choice of $\x_0$, Theorem~\ref{th:omp} does not guarantee near-oracle performance for the thresholding algorithm, even at high SNR\@.

With increasing noise, Theorem~\ref{th:omp} requires a corresponding increase in $\xmin$ to guarantee the success of the greedy algorithms. Consequently, Fig.~\ref{fi:sim1} demonstrates a deterioration of these algorithms when the SNR is low. On the other hand, the theorems for the relaxation algorithms make no such assumptions, and indeed these approaches continue to perform well, compared with the oracle estimator, even when the noise level is high. In particular, the Dantzig selector outperforms the CRB at low SNR; this is because the CRB is a bound on unbiased techniques, whereas when the noise is large, biased techniques such as an $\ell_1$ penalty become very effective. Robustness to noise is thus an important advantage of $\ell_1$-relaxation techniques.

\begin{figure}
\centerline{\includegraphics{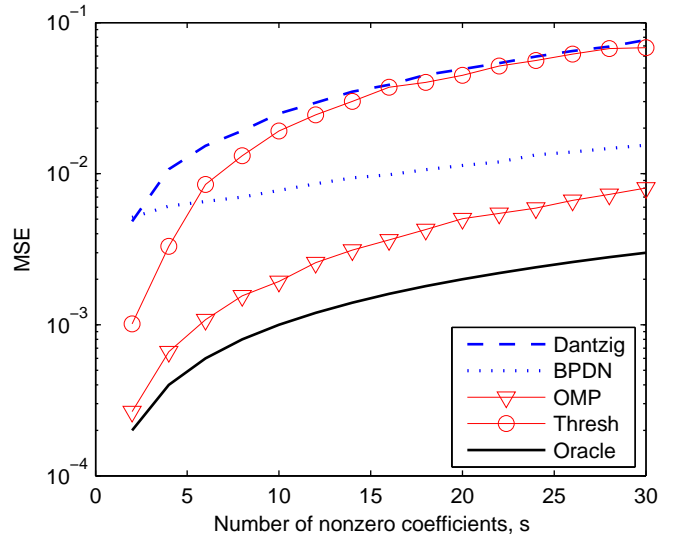}}
\caption{MSE of various estimators as a function of the support size $s$. The noise standard deviation is $\sigma=0.01$ and the dictionary is a $256 \times 512$ two-ortho matrix.}
\label{fi:sim}
\end{figure}

It is also interesting to examine the effect of the support size $s$ on the performance of the various estimators. To this end, 15 support sizes in the range $2 \le s \le 30$ were tested. For each value of $s$, random vectors $\x_0$ having $s$ nonzero entries were selected as in the previous simulation. The dictionary $\A$ was the $256 \times 512$ two-ortho matrix defined above; as in the previous experiment, other matrices were also tested and provided similar results. The standard deviation of the noise for this experiment was $\sigma = 0.01$. The results are plotted in Fig.~\ref{fi:sim}.

As mentioned above, the mutual coherence of the dictionary $\A$ is $1/16$, so that the proposed performance guarantees apply only when $\x_0$ is quite sparse ($s \le 5$). Nevertheless, Fig.~\ref{fi:sim} demonstrates that the estimation algorithms (with the exception of the thresholding approach) exhibit a graceful degradation as the support of $\x_0$ increases. At first sight this would appear to mean that the performance guarantees provided are overly pessimistic. For example, it is possible that the RICs in the present setting, while unknown, are fairly low and permit a stronger analysis than that of Section~\ref{se:random}. It is also quite reasonable to expect, as mentioned above, that some improvement in the theoretical guarantees is possible. However, it is worth recalling that the performance guarantees proposed in this paper apply to all sparse vectors, while the numerical results describe the performance averaged over different values of $\x_0$. Thus it is possible that there exist particular parameter values for which the performance is considerably poorer than that reported in Fig.~\ref{fi:sim}. Indeed, there exist values of $\A$ and $\x_0$ for which BPDN yields grossly incorrect results even when $\|\x_0\|_0$ is on the order of $1/\mu$ \cite{CandesPlan09}. However, identifying such worst-case parameters numerically is quite difficult; this is doubtlessly at least part of the reason for the apparent pessimism of the performance guarantees.

\section{Conclusion}

The performance of an estimator depends on the problem setting under consideration. As we have seen, under the adversarial noise scenario of Section~\ref{se:adversarial}, the estimation error of any algorithm can be as high as the noise power; in other words, the assumption of sparsity has not yielded any denoising effect. On the other hand, in the Bayesian regime in which both $\x_0$ and the noise vector are random, practical estimators come close to the performance of the oracle estimator \cite{CandesPlan09}. In Section~\ref{se:random}, we examined a middle ground between these two extremes, namely the setting in which $\x_0$ is deterministic but the noise is random. As we have shown, despite the fact that less is known about $\x_0$ in this case than in the Bayesian scenario, a variety of estimation techniques are still guaranteed to achieve performance close to that of the oracle estimator.

Our theoretical and numerical results suggest some conclusions concerning the choice of an estimator. In particular, at high SNR values, it appears that the greedy OMP algorithm has an advantage over the other algorithms considered herein. In this case the support set of $\x_0$ can be recovered accurately and OMP thus converges to the oracle estimator; by contrast, $\ell_1$ relaxations have a shrinkage effect which causes a loss of accuracy at high SNR\@. This is of particular interest since greedy algorithms are also computationally more efficient than relaxation methods. On the other hand, the $\ell_1$ relaxation techniques, and particularly the Dantzig selector, appear to be more effective than the greedy algorithms when the noise level is significant: in this case, shrinkage is a highly effective denoising technique. Indeed, as a result of the bias introduced by the shrinkage, $\ell_1$-based approaches can even perform better than the oracle estimator and the Cram\'er--Rao bound.

\appendices
\section{Proof of Lemma~\ref{le:RIP ROP}}
\label{ap:RIP ROP}

By Gershgorin's disc theorem \cite{golub96}, all eigenvalues of $\AL^T\AL$ are between $1 - (s-1)\mu$ and $1 + (s-1)\mu$. Combining this with the fact that, for all $\y$,
\beq
\lambda_{\min}(\AL^T\AL) \|\y\|_2^2
\le \|\AL\y\|_2^2
\le \lambda_{\max}(\AL^T\AL) \|\y\|_2^2,
\eeq
we obtain \eqref{eq:RIP bound}. Next, to demonstrate \eqref{eq:ROP bound}, observe that
\begin{align}
\left| \y_1^T \A_{\Lambda_1}^T \A_{\Lambda_2} \y_2 \right|
&\le \left|\y_1^T\right| \cdot \left|\A_{\Lambda_1}^T \A_{\Lambda_2}\right| \cdot \left|\y_2\right|
\end{align}
where the absolute value of a matrix or vector is taken elementwise. Since $\A_{\Lambda_1}^T \A_{\Lambda_2}$ is a submatrix of $\A^T\A$ which does not contain any of the diagonal elements of $\A^T\A$, it follows that each element in $\A_{\Lambda_1}^T \A_{\Lambda_2}$ is smaller in absolute value than $\mu$. Thus
\begin{align}
\left| \y_1^T \A_{\Lambda_1}^T \A_{\Lambda_2} \y_2 \right|
&\le \mu \left|\y_1^T\right| \one \one^T \left|\y_2\right| 
= \mu \|\y_1\|_1 \|\y_2\|_1
\end{align}
where $\one$ indicates a vector of ones. Using the fact that $\|\y\|_1 \le \sqrt{s} \|\y\|_2$ for any $s$-vector $\y$, we obtain
\begin{align}
\left| \y_1^T \A_{\Lambda_1}^T \A_{\Lambda_2} \y_2 \right|
&\le \mu \sqrt{s_1 s_2} \|\y_1\|_2 \|\y_2\|_2,
\end{align}
which implies that $\theta_{s_1,s_2}$ satisfies \eqref{eq:ROP bound}.

\section{Proof of Theorem~\ref{th:main}}
\label{ap:main}

The proof is based closely on the work of Tropp \cite{tropp06}. From the triangle inequality,
\beq \label{eq:thprf1}
\|\x_0 - \xbp\|_2 \le \|\x_0 - \xo\|_2 + \|\xo - \xbp\|_2
\eeq
where $\xo$ is the oracle estimator. Our goal is to separately bound the two terms on the right-hand side of \eqref{eq:thprf1}. Indeed, as we will see, the two constants $\sigma\sqrt{3}$ and $\tfrac{3}{2}\gamma$ in \eqref{eq:th err} arise, respectively, from the two terms in \eqref{eq:thprf1}.

Beginning with the term $\|\x_0 - \xo\|_2$, let $\xzL$ denote the $s$-vector containing the elements of $\x_0$ indexed by $\Lz$, and similarly, let $\xoL$ denote the corresponding subvector of $\xo$. We then have
\begin{align}
   \xzL - \xoL
&= \xzL - \ALz^\pinv (\A\x_0 + \w) \notag\\
&= \xzL - \ALz^\pinv(\ALz\xzL + \w) \notag\\
&= -\ALz^\pinv \w,
\end{align}
where we have used the fact that $\ALz$ has full column rank, which is a consequence \cite{donoho03} of the condition \eqref{eq:th cond}. Thus, $\xzL - \xoL$ is a Gaussian random vector with mean $\zero$ and covariance $\sigma^2 \ALz^\pinv \ALz^{\pinv T} = \sigma^2 (\ALz^T \ALz)^{-1}$.

For future use, we note that the cross-correlation between $\ALz^\pinv\w$ and $(\I-\ALz\ALz^\pinv)\w$ is
\begin{align}
   \E{ \ALz^\pinv \w \w^T (\I - \ALz\ALz^\pinv)^T }
&= \sigma^2 \ALz^\pinv (\I - \ALz\ALz^\pinv)^T \notag\\
&= \zero,
\end{align}
where we have used the fact \cite[Th.~1.2.1]{campbell79} that for any matrix $\M$
\beq
\M^\pinv \M^{\pinv T} \M^T = (\M^T \M)^\pinv \M^T = \M^\pinv.
\eeq
Since $\w$ is Gaussian, it follows that $\ALz^\pinv\w$ and $(\I-\ALz\ALz^\pinv)\w$ are statistically independent. Furthermore, because $\xzL-\xoL$ depends on $\w$ only through $\ALz^\pinv\w$, we conclude that
\beq \label{eq:thprf indep}
\x_0-\xo \text{ is statistically independent of } (\I-\ALz\ALz^\pinv)\w.
\eeq

We now wish to bound the probability that $\|\x_0 - \xo\|_2^2 > 3s\sigma^2$. Let $\z$ be a normalized Gaussian random variable, $\z \sim N(\zero, \I_s)$. Then
\begin{align} \label{eq:thprf event1}
&    \Pr{ \|\x_0 - \xo\|_2^2 > 3 s\sigma^2 } \notag\\
&=   \Pr{ \left\| \sigma (\ALz^T\ALz)^{-1/2} \z \right\|_2^2 \ge 3s\sigma^2 } \notag\\
&\le \Pr{ \left\|(\ALz^T\ALz)^{-1/2}\right\|^2 \|\z\|_2^2 \ge 3s }
\end{align}
where $\|\M\|$ denotes the maximum singular value of the matrix $\M$. Thus, $\|(\ALz^T\ALz)^{-1/2}\| = 1/s_{\min}$, where $s_{\min}$ is the minimum singular value of $\ALz$. From the Gershgorin disc theorem \cite[p.~320]{golub96}, it follows that $s_{\min} \ge \sqrt{1 - (s-1)\mu}$. Using \eqref{eq:th cond}, this can be simplified to $s_{\min} \ge \sqrt{2/3}$, and therefore
\beq
\left\|(\ALz^T\ALz)^{-1/2}\right\| \le \sqrt{\frac{3}{2}}.
\eeq
Combining with \eqref{eq:thprf event1} yields
\beq \label{eq:thprf2}
    \Pr{ \|\x_0 - \xo\|_2^2 > 3 s\sigma^2 }
\le \Pr{ \|\z\|_2^2 \ge 2s}.
\eeq
Observe that $\|\z\|_2^2$ is the sum of $s$ independent normalized Gaussian random variables. The right-hand side of \eqref{eq:thprf2} is therefore $1 - \Fchi(2s)$, where $\Fchi(\cdot)$ is the cumulative distribution function of the $\chi^2$ distribution with $s$ degrees of freedom. Using the formula \cite[\S16.3]{kendall-vol1} for $\Fchi(\cdot)$, we have
\beq
    \Pr{ \|\x_0 - \xo\|_2^2 > 3s\sigma^2 }
\le Q\!\left( \tfrac{1}{2}s, s \right)
\eeq
where $Q(a,z)$ is the regularized Gamma function
\beq
Q(a,z) \triangleq \frac{\int_z^\infty t^{a-1} e^{-t} dt}{\int_0^\infty t^{a-1} e^{-t} dt}.
\eeq
$Q\!\left( \tfrac{1}{2}s, s \right)$ decays exponentially as $s \rightarrow \infty$, and it can be seen that
\beq \label{eq:thprf3}
Q\!\left( \tfrac{1}{2}s, s \right) < e^{-s/7} \quad \text{for all $s$.}
\eeq
We thus conclude that the event
\beq \label{eq:thprf4}
\|\x_0 - \xo\|_2^2 \le 3s\sigma^2
\eeq
occurs with probability no smaller than $1 - e^{-s/7}$. Note that the same technique can be applied to obtain bounds on the probability that $\|\x_0 - \xo\|_2^2 > \alpha s\sigma^2$, for any $\alpha>\frac 2 3$. The only difference will be the rate of exponential decay in \eqref{eq:thprf3}. However, the distance between $\x_0$ and $\xo$ is usually small compared with the distance between $\xo$ and $\xbp$, so that such an approach does not significantly affect the overall result.

The above calculations provided a bound on the first term in \eqref{eq:thprf1}. To address the second term $\|\xo-\xbp\|_2$, define the random event
\beq \label{eq:def G}
G : \ \max_i \left| \a_i^T (\I - \ALz\ALz^\pinv) \b \right| \le \tfrac{1}{2} \gamma
\eeq
where $\a_i$ is the $i$th column of $\A$. It is shown in \cite[App.~IV-A]{tropp06} that
\begin{align}\label{eq:thprfa}
\Pr{ G }
&\ge 1 - (m-s)\exp\left(-\frac{\gamma^2}{8\sigma^2}\right).
\end{align}
If $G$ indeed occurs, then the portion of the measurements $\b$ which do not belong to the range space of $\ALz$ are small, and consequently it has been shown \cite[Cor.~9]{tropp06} that, in this case, the solution $\xbp$ to \eqref{eq:bpdn} is unique, the support of $\xbp$ is a subset of $\Lz$, and
\beq
\|\xbp - \xo\|_\infty \le \tfrac{3}{2} \gamma.
\eeq
Since both $\xbp$ and $\xo$ are nonzero only in $\Lz$, this implies that
\beq \label{eq:thprf b}
\|\xbp - \xo\|_2 \le \tfrac{3}{2} \gamma \sqrt{s}.
\eeq

The event $G$ depends on the random variable $\w$ only through $(\I-\ALz\ALz^\pinv)\w$. Thus, it follows from \eqref{eq:thprf indep} that $G$ is statistically independent of the event \eqref{eq:thprf event1}. The probability that both events occur simultaneously is therefore given by the product of their respective probabilities. In other words, with probability exceeding \eqref{eq:th prob}, both \eqref{eq:thprf b} and \eqref{eq:thprf4} hold. Using \eqref{eq:thprf1} completes the proof of the theorem.

\section{Proof of Theorem~\ref{th:omp}}
\label{ap:omp}

The claims concerning both algorithms are closely related. To emphasize this similarity, we first provide several lemmas which will be used to prove both results. These lemmas are all based on an analysis of the random event
\beq \label{eq:def B}
B = \left\{ \max_{1 \le i \le m} |\a_i^T \w| < \tau \right\}
\eeq
where
\beq \label{eq:def tau}
\tau \triangleq \sigma \sqrt{2 (1+\alpha) \log m}
\eeq
and $\alpha > 0$. Our proof will be based on demonstrating that $B$ occurs with high probability, and that when $B$ does occur, both thresholding and OMP achieve near-oracle performance.

\begin{lemma} \label{le:prob B}
Suppose that $\w \sim N(\zero, \sigma^2 \I)$. Then, the event $B$ of \eqref{eq:def B} occurs with a probability of at least \eqref{eq:omp prob}.
\end{lemma}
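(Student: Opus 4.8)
The plan is to bound the failure probability of $B$ by a union bound over the $m$ atoms and then control each term $\Pr{|\a_i^T\w| \ge \tau}$ using the fact that $\a_i^T\w$ is a scalar Gaussian. Since $\|\a_i\|_2 = 1$, the random variable $\a_i^T\w$ is distributed as $N(0,\sigma^2)$, so $\a_i^T\w/\sigma$ is a standard normal and $\Pr{|\a_i^T\w| \ge \tau} = 2\Qfunc{\tau/\sigma} = 2\Qfunc{\sqrt{2(1+\alpha)\log m}}$. The union bound then gives
\beq
\Pr{B^c} \le m \cdot 2\Qfunc{\sqrt{2(1+\alpha)\log m}}.
\eeq

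First I would recall the standard Gaussian tail bound. The cleanest one for this purpose is $\Qfunc{x} \le \frac{1}{x\sqrt{2\pi}} e^{-x^2/2}$ for $x>0$, which follows from $\int_x^\infty e^{-t^2/2}\,dt \le \int_x^\infty \frac{t}{x} e^{-t^2/2}\,dt = \frac{1}{x} e^{-x^2/2}$. Substituting $x = \sqrt{2(1+\alpha)\log m}$ gives $e^{-x^2/2} = e^{-(1+\alpha)\log m} = m^{-(1+\alpha)}$ and $x\sqrt{2\pi} = 2\sqrt{\pi(1+\alpha)\log m}$, so
\beq
2\Qfunc{\sqrt{2(1+\alpha)\log m}} \le \frac{m^{-(1+\alpha)}}{\sqrt{\pi(1+\alpha)\log m}}.
\eeq
Multiplying by $m$ yields $\Pr{B^c} \le \frac{1}{m^\alpha \sqrt{\pi(1+\alpha)\log m}}$, which is exactly $1$ minus the expression \eqref{eq:omp prob}, completing the proof.

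I do not anticipate a genuine obstacle here: the only point requiring a little care is making sure the constant $\sqrt{\pi(1+\alpha)\log m}$ in the denominator of \eqref{eq:omp prob} matches what the tail bound actually delivers, i.e., tracking the factor $\sqrt{2}$ in $x\sqrt{2\pi}$ correctly against the $2$ coming from the two-sided probability. The main thing to verify is that $x > 0$ (equivalently $\alpha>0$ and $m \ge 2$, or more precisely $\log m > 0$) so that the tail bound is valid; this is guaranteed since the dictionary is overcomplete with $m > n \ge 1$, hence $m \ge 2$. The remainder is a mechanical substitution, so this lemma is essentially a direct consequence of the union bound and the elementary Gaussian tail estimate.
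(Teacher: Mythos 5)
Your proof is correct, and it reaches exactly the stated bound \eqref{eq:omp prob}; the only difference from the paper is the first step. The paper invokes \v{S}id\'ak's lemma for jointly Gaussian variables to get $\Pr{B} \ge \prod_i \Pr{|\a_i^T\w| \le \tau} \ge (1-\eta)^m$ with $\eta = \sqrt{2/\pi}\,(\sigma/\tau)e^{-\tau^2/2\sigma^2}$ (which is precisely your $2\Qfunc{\tau/\sigma}$ after the same tail estimate), and then immediately weakens $(1-\eta)^m$ to $1-m\eta$. You instead apply the union bound directly to $\Pr{B^c}$, which lands on the identical quantity $1-m\eta$ without any detour. Your route is more elementary: it needs no joint-Gaussianity or positive-dependence structure among the $\a_i^T\w$, and it avoids the paper's side case distinction for $\eta>1$ (your bound is simply vacuous there, which is harmless). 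The only thing \v{S}id\'ak's lemma buys the paper is the intermediate product form $(1-\eta)^m$, which is strictly tighter than $1-m\eta$, but since the paper discards that advantage in the very next line, nothing is lost by your approach. Your bookkeeping of the factor of $2$ from the two-sided tail against the $\sqrt{2\pi}$ in the denominator is also correct, and the requirement $m\ge 2$ so that $\log m>0$ is indeed satisfied in the overcomplete setting.
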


\begin{proof}
The random variables $\{\a_i^T \w\}_{i=1}^m$ are jointly Gaussian. Therefore, by \v{S}id\'ak's lemma \cite[Th.~1]{sidak67}
\beq \label{eq:th prf 3}
\Pr{B}
=   \Pr{ \max_{1 \le i \le m} |\a_i^T \w| < \tau }
\ge \prod_{i=1}^m \Pr{|\a_i^T \w| \le \tau}.
\eeq
Since $\|\a_i\|_2 = 1$, each random variable $\a_i^T \w$ has mean zero and variance $\sigma^2$. Consequently,
\beq \label{eq:th prf 2}
\Pr{|\a_i^T \w| < \tau} = 1 - 2 \Qfunc{\frac{\tau}{\sigma}}
\eeq
where $\Qfunc{x} = (1/\sqrt{2\pi}) \int_x^\infty e^{-z^2/2} dz$ is the Gaussian tail probability. Using the bound
\beq
\Qfunc{x} \le \frac{1}{x \sqrt{2\pi}} e^{-x^2/2}
\eeq
we obtain from \eqref{eq:th prf 2}
\beq \label{eq:th eta}
\Pr{|\a_i^T \w| < \tau} \ge 1 - \eta
\eeq
where
\beq
\eta \triangleq \sqrt{\frac{2}{\pi}} \cdot \frac{\sigma}{\tau} e^{-\tau^2/2\sigma^2}.
\eeq
When $\eta > 1$, the bound \eqref{eq:omp prob} is meaningless and the theorem holds vacuously. Otherwise, when $\eta \le 1$, we have from \eqref{eq:th prf 3} and \eqref{eq:th eta}
\beq
\Pr{B} \ge (1 - \eta)^m \ge 1 - m \eta
\eeq
where the final inequality holds for any $\eta \le 1$ and $m \ge 1$. Substituting the values of $\eta$ and $\tau$ and simplifying, we obtain that $B$ holds with a probability no lower than \eqref{eq:omp prob}, as required.
\end{proof}

The next lemma demonstrates that, under suitable conditions, correlating $\b$ with the dictionary atoms $\a_i$ is an effective method of identifying the atoms participating in the support of $\x_0$.

\begin{lemma} \label{le:xmax}
Let $\x_0$ be a vector with support $\Lz = \supp(\x_0)$ of size $s=|\Lz|$, and let $\b = \A\x_0 + \w$ for some noise vector $\w$. Define $\xmin$ and $\xmax$ as in \eqref{eq:def xmin xmax}, and suppose that
\beq \label{eq:xmax cond}
\xmax - (2s-1)\mu \xmax \ge 2\tau.
\eeq
Then, if the event $B$ of \eqref{eq:def B} holds, we have
\beq \label{eq:xmax res}
\max_{j \in \Lz} |\a_j^T \b| > \max_{j \notin \Lz} |\a_j^T \b|.
\eeq
If, rather than \eqref{eq:xmax cond}, the stronger condition
\beq \label{eq:xminmax cond}
\xmin - (2s-1)\mu \xmax \ge 2\tau
\eeq
is given, then, under the event $B$, we have
\beq \label{eq:xminmax res}
\min_{j \in \Lz} |\a_j^T \b| > \max_{j \notin \Lz} |\a_j^T \b|.
\eeq
\end{lemma}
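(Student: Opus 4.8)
The plan is to reduce both claims to a single pair of coherence estimates on the correlations $\a_j^T\b$, distinguishing indices $j\in\Lz$ from indices $j\notin\Lz$, and then to compare the resulting bounds. Since $\b=\A\x_0+\w$ and $\A\x_0=\sum_{i\in\Lz}x_{0,i}\a_i$, for every index $j$ we have
\[
\a_j^T\b = \sum_{i\in\Lz} x_{0,i}\,\a_j^T\a_i + \a_j^T\w .
\]
Throughout I work on the event $B$ of \eqref{eq:def B}, so that $|\a_j^T\w|<\tau$ for every $j$.

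First I would treat an index $j\in\Lz$. Isolating the diagonal term $x_{0,j}\,\a_j^T\a_j=x_{0,j}$ (using $\|\a_j\|_2=1$) and applying the triangle inequality, the remaining $s-1$ cross terms each have magnitude at most $\mu\,|x_{0,i}|\le\mu\,\xmax$, so on $B$,
\[
|\a_j^T\b| \;>\; |x_{0,j}| - (s-1)\mu\,\xmax - \tau \qquad\text{for every } j\in\Lz .
\]
Specializing to the index attaining $\xmax$ bounds $\max_{j\in\Lz}|\a_j^T\b|$ from below by $\xmax-(s-1)\mu\,\xmax-\tau$; using instead $|x_{0,j}|\ge\xmin$ for all $j\in\Lz$ bounds $\min_{j\in\Lz}|\a_j^T\b|$ from below by $\xmin-(s-1)\mu\,\xmax-\tau$. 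Next I would treat an index $j\notin\Lz$: there is no diagonal term, so all $s$ terms of the sum are cross terms of magnitude at most $\mu\,\xmax$, giving on $B$
\[
|\a_j^T\b| \;<\; s\mu\,\xmax + \tau \qquad\text{for every } j\notin\Lz .
\]

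It then remains only to compare. Rearranging, the hypothesis \eqref{eq:xmax cond} is exactly $\xmax-(s-1)\mu\,\xmax-\tau \ge s\mu\,\xmax+\tau$, so on $B$ the lower bound for the largest in-support correlation strictly exceeds the upper bound for every out-of-support correlation, which is \eqref{eq:xmax res}; similarly, \eqref{eq:xminmax cond} is exactly $\xmin-(s-1)\mu\,\xmax-\tau\ge s\mu\,\xmax+\tau$, yielding \eqref{eq:xminmax res} with $\min_{j\in\Lz}$ in place of the maximum. I do not expect a genuine obstacle here, as the argument is a direct coherence estimate; the only points requiring care are the bookkeeping of the term counts — $s-1$ cross terms in the support versus $s$ outside, which is precisely what produces the coefficient $(2s-1)\mu$ and the factor $2$ in front of $\tau$ in \eqref{eq:xmax cond}--\eqref{eq:xminmax cond} — and the observation that the strict inequality $|\a_i^T\w|<\tau$ furnished by $B$ is what makes the final comparison strict even though the hypotheses are stated with ``$\ge$''.
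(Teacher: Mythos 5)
Your proposal is correct and follows essentially the same route as the paper's proof: isolate the diagonal term for in-support indices, bound the $s-1$ (respectively $s$) cross terms by $\mu\,\xmax$ via the coherence, control the noise terms by $\tau$ on the event $B$, and compare the resulting bounds, with the strictness supplied by the strict inequality in the definition of $B$. The bookkeeping that yields $(2s-1)\mu$ and the factor $2\tau$ matches the paper exactly, so there is nothing to add.
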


\begin{proof}
The proof is an adaptation of \cite[Lemma~5.2]{donoho06}. Beginning with the term $\max_{j \notin \Lz} | \a_j^T \b |$, we have, under the event $B$,
\begin{align} \label{eq:th prf 1}
    \max_{j \notin \Lz} | \a_j^T \b |
&=  \max_{j \notin \Lz} \left| \a_j^T \w + \sum_{i \in \Lz} x_i \a_j^T \a_i \right| \notag\\
&\le\max_{j \notin \Lz} | \a_j^T \w | + \max_{j \notin \Lz} \sum_{i \in \Lz} \left| x_i \a_j^T \a_i \right| \notag\\
&<  \tau + s \mu \xmax.
\end{align}
On the other hand, when $B$ holds,
\begin{align}
    \max_{j \in \Lz} | \a_j^T \b |
&=  \max_{j \in \Lz} \left| x_j + \a_j^T \w + \sum_{i \in \Lz\backslash\{j\}} x_i \a_j^T \a_i \right| \notag\\
&\ge\xmax - \max_{j \in \Lz} \left| \a_j^T \w + \sum_{i \in \Lz\backslash\{j\}} x_i \a_j^T \a_i \right| \notag\\
&>  \xmax - \tau - (s-1)\mu\xmax \notag\\
&=  \xmax - (2s-1)\mu\xmax -\tau + s\mu\xmax .
\end{align}
Together with \eqref{eq:th prf 1}, this yields
\beq
\max_{j \in \Lz} | \a_j^T \b | >  \xmax - (2s-1)\mu\xmax -2\tau + \max_{j \notin \Lz} | \a_j^T \b |.
\eeq
Thus, under the condition \eqref{eq:xmax cond}, we obtain \eqref{eq:xmax res}. Similarly, when $B$ holds, we have
\begin{align}
    \min_{j \in \Lz} \left| \a_j^T \b \right|
&=  \min_{j \in \Lz} \left| x_j + \a_j^T \w + \sum_{i \in \Lz\backslash\{j\}} x_i \a_j^T \a_i \right| \notag\\
&>  \xmin - \tau - (s-1)\mu\xmax \notag\\
&=  \xmin - (2s-1)\mu\xmax - \tau + s\mu\xmax.
\end{align}
Again using \eqref{eq:th prf 1}, we obtain
\beq
\min_{j \in \Lz} \left| \a_j^T \b \right| > \xmin - (2s-1)\mu\xmax - 2\tau + \max_{j \notin \Lz} |\a_j^T\b|.
\eeq
Consequently, under the assumption \eqref{eq:xminmax cond}, we conclude that \eqref{eq:xminmax res} holds, as required.
\end{proof}

The following lemma bounds the performance of the oracle estimator under the event $B$. The usefulness of this lemma stems from the fact that, if either OMP or the thresholding algorithm correctly identify the support of $\x_0$, then their estimate is identical to that of the oracle.

\begin{lemma} \label{le:err}
Let $\x_0$ be a vector with support $\Lz = \supp(\x_0)$, and let $\b = \A\x_0 + \w$ for some noise vector $\w$. If the event $B$ of \eqref{eq:def B} occurs, then
\beq
\|\xo - \x_0\|_2^2 \le 2 s \sigma^2 (1+\alpha) \log m \frac{1}{(1 - (s-1)\mu)^2}.
\eeq
\end{lemma}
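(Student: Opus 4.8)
The plan is to express $\xo - \x_0$ explicitly in terms of the noise, bound its $\ell_2$ norm deterministically by $\|\ALz^\pinv\w\|_2$, and then invoke the event $B$ together with the Gershgorin bound on the singular values of $\ALz$. First, recall from the computation already carried out in Appendix~\ref{ap:main} that $\xoL - \xzL = \ALz^\pinv\w$, and since both $\xo$ and $\x_0$ are supported on $\Lz$, we have $\|\xo-\x_0\|_2 = \|\ALz^\pinv\w\|_2$. Using $\ALz^\pinv = (\ALz^T\ALz)^{-1}\ALz^T$, I would write $\|\ALz^\pinv\w\|_2^2 = (\ALz^T\w)^T (\ALz^T\ALz)^{-2} (\ALz^T\w)$ and bound this by $\|(\ALz^T\ALz)^{-1}\|^2 \cdot \|\ALz^T\w\|_2^2$, where $\|\cdot\|$ is the spectral norm. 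By the Gershgorin disc theorem (as already used in the proof of Lemma~\ref{le:RIP ROP} and Theorem~\ref{th:main}), the smallest eigenvalue of $\ALz^T\ALz$ is at least $1-(s-1)\mu$, so $\|(\ALz^T\ALz)^{-1}\| \le 1/(1-(s-1)\mu)$.

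Next I would control $\|\ALz^T\w\|_2^2$ under the event $B$. The vector $\ALz^T\w$ has $s$ entries, each of the form $\a_j^T\w$ for $j \in \Lz$; under $B$ every such entry satisfies $|\a_j^T\w| < \tau$ with $\tau = \sigma\sqrt{2(1+\alpha)\log m}$. Hence $\|\ALz^T\w\|_2^2 < s\tau^2 = 2s\sigma^2(1+\alpha)\log m$. Combining the two bounds gives
\beq
\|\xo-\x_0\|_2^2 \le \frac{1}{(1-(s-1)\mu)^2}\cdot 2s\sigma^2(1+\alpha)\log m,
\eeq
which is exactly the claimed inequality. Note that the sparsity hypothesis $s < 1/(3\mu)$ in Theorem~\ref{th:omp} guarantees $1-(s-1)\mu > 0$, so the denominator is well-defined and positive; the lemma statement itself only needs this positivity, which follows from the ambient assumptions.

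There is essentially no hard step here: everything reduces to a norm inequality plus the Gershgorin bound plus the definition of $B$. The only mild subtlety worth stating carefully is the passage from $\max_j |\a_j^T\w| < \tau$ to $\|\ALz^T\w\|_2 < \sqrt{s}\,\tau$, which is just the elementary fact that an $s$-vector with all entries bounded by $\tau$ in magnitude has $\ell_2$ norm at most $\sqrt{s}\,\tau$. One could alternatively route the argument through $\|\ALz^\pinv\| = 1/s_{\min}(\ALz) \le 1/\sqrt{1-(s-1)\mu}$ and $\|\ALz^\pinv\w\|_2 \le \|\ALz^\pinv\|\cdot\|\ALz\ALz^\pinv\w\|_2$, but the direct route above via $(\ALz^T\ALz)^{-1}\ALz^T$ is cleaner since it lands immediately on the form $\|\ALz^T\w\|_2$ that the event $B$ controls. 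I would present the direct route.
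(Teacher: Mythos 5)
Your proposal is correct and follows essentially the same route as the paper's own proof: reduce $\|\xo-\x_0\|_2$ to $\|\ALz^\pinv\w\|_2 = \|(\ALz^T\ALz)^{-1}\ALz^T\w\|_2$, bound the spectral norm of $(\ALz^T\ALz)^{-1}$ via Gershgorin, and control $\|\ALz^T\w\|_2^2 = \sum_{i\in\Lz}(\a_i^T\w)^2 < s\tau^2$ using the event $B$. The only cosmetic remark is that the positivity of $1-(s-1)\mu$ follows here from the hypothesis \eqref{eq:omp given} of Theorem~\ref{th:omp} (which forces $(2s-1)\mu<1$) rather than from an explicit assumption $s<1/(3\mu)$, but this does not affect the argument.
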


\begin{proof}
Note that both $\xo$ and $\x_0$ are supported on $\Lz$, and therefore
\begin{align}
\|\xo - \x_0\|_2^2 = \|\ALz^\pinv\b - \xzLz\|_2^2
\end{align}
where $\xzLz$ is the subvector of nonzero entries of $\x_0$. We thus have, under the event $B$,
\begin{align}
     \|\xo - \x_0\|_2^2
&=   \|\ALz^\pinv \ALz \xzLz + \ALz^\pinv \w - \xzLz\|_2^2 \notag\\
&=   \|\ALz^\pinv \w\|_2^2 \notag\\
&=   \left\| (\ALz^T \ALz)^{-1} \ALz^T \w \right\|_2^2 \notag\\
&\le \left\|(\ALz^T\ALz)^{-1}\right\|^2 \sum_{i \in \Lz} (\a_i^T \w)^2 \notag\\
&\le \frac{1}{(1 - (s-1)\mu)^2} s \sigma^2 2 (1+\alpha) \log m
\end{align}
where, in the last step, we used the definition \eqref{eq:def B} of $B$ and the fact that $\|\ALz^T\ALz\| \ge 1 - (s-1)\mu$, which was demonstrated in Appendix~\ref{ap:main}. This completes the proof the lemma.
\end{proof}

We are now ready to prove Theorem~\ref{th:omp}. The proof for the thresholding algorithm is obtained by combining the three lemmas presented above. Indeed, Lemma~\ref{le:prob B} ensures that the event $B$ occurs with probability at least as high as the required probability of success \eqref{eq:omp prob}. Whenever $B$ occurs, we have by Lemma~\ref{le:xmax} that the atoms corresponding to $\Lz$ all have strictly higher correlation with $\b$ than the off-support atoms, so that the thresholding algorithm identifies the correct support $\Lz$, and is thus equivalent to the oracle estimator $\xo$ as long as $B$ holds. Finally, by Lemma~\ref{le:err}, identification of the true support $\Lz$ guarantees the required error \eqref{eq:omp err}.

We now prove the OMP performance guarantee. Our aim is to show that when $B$ occurs, OMP correctly identifies the support of $\x_0$; the result then follows by Lemmas \ref{le:prob B} and \ref{le:err}. To this end we employ the technique used in the proof of \cite[Th.~5.1]{donoho06}. We begin by examining the first iteration of the OMP algorithm, in which one identifies the atom $\a_i$ whose correlation with $\b$ is maximal. Note that \eqref{eq:omp given} implies \eqref{eq:xmax cond}, and therefore, by Lemma~\ref{le:xmax}, the atom having the highest correlation with $\b$ corresponds to an element in the support $\Lz$ of $\x_0$. Consequently, the first step of the OMP algorithm correctly identifies an element in $\Lz$.

The proof now continues by induction. Suppose we are currently in the $i$th iteration of OMP, with $1 < i \le s$, and assume that atoms from the correct support were identified in all $i-1$ previous steps. Referring to the notation used in the definition of OMP in Section~\ref{ss:techniques}, this implies that $\supp(\xomp^{i-1}) = \Lambda^{i-1} \subset \Lz$. The $i$th step consists of identifying the atom $\a_i$ which is maximally correlated with the residual $\r^i$. By the definition of $\r^i$, we have
\beq \label{eq:r as meas}
\r^i = \A\tilde{\x}^{i-1} + \w
\eeq
where $\tilde{\x}^{i-1} = \x_0 - \xomp^{i-1}$. Thus $\supp(\tilde{\x}^{i-1}) \subseteq \Lz$, so that $\r^i$ is a noisy measurement of the vector $\A\tilde{\x}^{i-1}$, which has a sparse representation consisting of no more than $s$ atoms. Now, since
\beq
\|\xomp^{i-1}\|_0 = i-1 < s = \|x_0\|_0,
\eeq
it follows that at least one nonzero entry in $\tilde{\x}^{i-1}$ is equal to the corresponding entry in $\x_0$. Consequently
\beq \label{eq:r xmax}
\max_i |\tilde{x}^{i-1}_i| \ge \xmin.
\eeq
Note that the model \eqref{eq:r as meas} is precisely of the form \eqref{eq:b=Ax+w}, with $\r^i$ taking the place of the measurements $\b$ and $\tilde{\x}^{i-1}$ taking the place of the sparse vector $\x_0$. It follows from \eqref{eq:r xmax} and \eqref{eq:omp given} that this model satisfies the requirement \eqref{eq:xmax cond}. Consequently, by Lemma~\ref{le:xmax}, we have that under the event $B$,
\beq
\max_{i \in \Lz} |\a_i^T \r^i| > \max_{i \notin \Lz} |\a_i^T \r^i|.
\eeq
Therefore, the $i$th iteration of OMP will choose an element within $\Lz$ to add to the support. By induction it follows that the first $s$ steps of OMP all identify elements in $\Lz$, and since OMP never chooses the same element twice, the entire support $\Lz$ will be identified after $s$ iterations. This completes the proof of Theorem~\ref{th:omp}.

\bibliographystyle{IEEEtran}
\bibliography{IEEEabrv,zvika}

\begin{thebibliography}{10}
\providecommand{\url}[1]{#1}
\csname url@samestyle\endcsname
\providecommand{\newblock}{\relax}
\providecommand{\bibinfo}[2]{#2}
\providecommand{\BIBentrySTDinterwordspacing}{\spaceskip=0pt\relax}
\providecommand{\BIBentryALTinterwordstretchfactor}{4}
\providecommand{\BIBentryALTinterwordspacing}{\spaceskip=\fontdimen2\font plus
\BIBentryALTinterwordstretchfactor\fontdimen3\font minus
  \fontdimen4\font\relax}
\providecommand{\BIBforeignlanguage}[2]{{%
\expandafter\ifx\csname l@#1\endcsname\relax
\typeout{** WARNING: IEEEtran.bst: No hyphenation pattern has been}%
\typeout{** loaded for the language `#1'. Using the pattern for}%
\typeout{** the default language instead.}%
\else
\language=\csname l@#1\endcsname
\fi
#2}}
\providecommand{\BIBdecl}{\relax}
\BIBdecl

\bibitem{pati93}
Y.~C. Pati, R.~Rezaiifar, and P.~S. Krishnaprasad, ``Orthogonal matching
  pursuit: Recursive function approximation with applications to wavelet
  decomposition,'' in \emph{Proc. 27th Asilomar Conf. Signals, Systems, and
  Computers}, Nov. 1993, pp. 40--44.

\bibitem{candes07}
E.~Candes and T.~Tao, ``The {Dantzig} selector: Statistical estimation when $p$
  is much larger than $n$,'' \emph{Ann. Statist.}, vol.~35, no.~6, pp.
  2313--2351, 2007, with discussion.

\bibitem{chen98}
S.~S. Chen, D.~L. Donoho, and M.~A. Saunders, ``Atomic decomposition by basis
  pursuit,'' \emph{{SIAM} J. Sci. Comput.}, vol.~20, pp. 33--61, 1998.

\bibitem{donoho06}
D.~L. Donoho, M.~Elad, and V.~N. Temlyakov, ``Stable recovery of sparse
  overcomplete representations in the presence of noise,'' \emph{{IEEE} Trans.
  Inf. Theory}, vol.~52, no.~1, pp. 6--18, 2006.

\bibitem{candes06}
E.~J. Cand\`es, J.~K. Romberg, and T.~Tao, ``Stable signal recovery from
  incomplete and inaccurate measurements,'' \emph{Comm. Pure Appl. Math.}, vol.
  {LIX}, pp. 1207--1223, 2006.

\bibitem{fuchs05}
J.~J. Fuchs, ``Recovery of exact sparse representations in the presence of
  bounded noise,'' \emph{{IEEE} Trans. Inf. Theory}, vol.~51, no.~10, pp.
  3601--3608, 2005.

\bibitem{tropp06}
J.~A. Tropp, ``Just relax: Convex programming methods for identifying sparse
  signals in noise,'' \emph{{IEEE} Trans. Inf. Theory}, vol.~52, no.~3, pp.
  1030--1051, 2006.

\bibitem{bickel08}
P.~J. Bickel, Y.~Ritov, and A.~Tsybakov, ``Simultaneous analysis of {Lasso} and
  {Dantzig} selector,'' \emph{Ann. Statist.}, vol.~37, no.~4, pp. 1705--1732,
  2009.

\bibitem{kay93}
S.~M. Kay, \emph{Fundamentals of Statistical Signal Processing: Estimation
  Theory}.\hskip 1em plus 0.5em minus 0.4em\relax Englewood Cliffs, NJ:
  Prentice Hall, 1993.

\bibitem{ben-haim09b}
\BIBentryALTinterwordspacing
Z.~Ben-Haim and Y.~C. Eldar, ``The {C}ram\'er--{R}ao bound for sparse
  estimation,'' \emph{{IEEE} Trans. Signal Process.}, submitted. [Online].
  Available: \url{http://arxiv.org/abs/0905.4378}
\BIBentrySTDinterwordspacing

\bibitem{candes06b}
E.~J. Cand\`es, ``Modern statistical estimation via oracle inequalities,''
  \emph{Acta Numerica}, pp. 1--69, 2006.

\bibitem{donoho01}
D.~Donoho and X.~Huo, ``Uncertainty principles and ideal atomic
  decomposition,'' \emph{{IEEE} Trans. Inf. Theory}, vol.~47, no.~7, pp.
  2845--2862.

\bibitem{CandesPlan09}
E.~J. Cand\`es and Y.~Plan, ``Near-ideal model selection by $\ell_1$
  minimization,'' \emph{Ann. Statist.}, vol.~37, no.~5A, pp. 2145--2177, Oct.
  2009.

\bibitem{cai09}
\BIBentryALTinterwordspacing
T.~Cai, L.~Wang, and G.~Xu, ``Stable recovery of sparse signals and an oracle
  inequality,'' U. Penn., Tech. Rep., 2009. [Online]. Available:
  \url{http://www-stat.wharton.upenn.edu/~tcai/paper/Stable-Recovery-MIP.pdf}
\BIBentrySTDinterwordspacing

\bibitem{ben-haim10b}
Z.~Ben-Haim, Y.~C. Eldar, and M.~Elad, ``Coherence-based near-oracle
  performance guarantees for sparse estimation under {Gaussian} noise,'' in
  \emph{Proc. Int. Conf. Acoustics, Speech, and Signal Processing (ICASSP
  2010)}, Dallas, TX, Mar. 2010, submitted.

\bibitem{schnass07}
K.~Schnass and P.~Vandergheynst, ``Average performance analysis for
  thresholding,'' \emph{{IEEE} Signal Process. Lett.}, vol.~14, no.~11, pp.
  828--831, Nov. 2007.

\bibitem{tropp08}
J.~A. Tropp, ``Norms of random submatrices and sparse approximation,'' \emph{C.
  R. Acad. Sci. Paris, Ser. I}, vol. 346, no. 23-24, pp. 1271--1274, 2008.

\bibitem{eldar09c}
\BIBentryALTinterwordspacing
Y.~C. Eldar and H.~Rauhut, ``Average case analysis of multichannel sparse
  recovery using convex relaxation,'' \emph{{IEEE} Trans. Inf. Theory}, to
  appear. [Online]. Available: \url{http://arxiv.org/abs/0904.0494}
\BIBentrySTDinterwordspacing

\bibitem{wainwright06}
M.~J. Wainwright, ``Sharp thresholds for noisy and high-dimensional recovery of
  sparsity using $\ell_1$-constrained quadratic programming ({Lasso}),''
  \emph{{IEEE} Trans. Inf. Theory}, vol.~55, no.~5, pp. 2183--2202, May 2009.

\bibitem{candes08}
\BIBentryALTinterwordspacing
E.~J. Cand\`es, ``The restricted isometry property and its implications for
  compressed sensing,'' \emph{Compte Rendus de l'Academie des Sciences, Paris,
  Serie I}, vol. 346, pp. 589--592, 2008. [Online]. Available:
  \url{http://www.acm.caltech.edu/~emmanuel/papers/RIP.pdf}
\BIBentrySTDinterwordspacing

\bibitem{golub96}
G.~H. Golub and C.~F. Van~Loan, \emph{Matrix Computations}, 3rd~ed.\hskip 1em
  plus 0.5em minus 0.4em\relax Baltimore, MD: Johns Hopkins University Press,
  1996.

\bibitem{efron07}
B.~Efron, T.~Hastie, and R.~Tibshirani, ``Discussion: The {D}antzig selector:
  Statistical estimation when $p$ is much larger than $n$,'' \emph{Ann.
  Statist.}, vol.~35, no.~6, pp. 2358--2364, 2007.

\bibitem{candes07b}
E.~Candes and T.~Tao, ``Rejoinder: The {D}antzig selector: Statistical
  estimation when $p$ is much larger than $n$,'' \emph{Ann. Statist.}, vol.~35,
  no.~6, pp. 2392--2404, 2007.

\bibitem{elad06}
M.~Elad and M.~Aharon, ``Image denoising via sparse and redundant
  representations over learned dictionaries,'' \emph{{IEEE} Trans. Image
  Process.}, vol.~15, no.~12, pp. 3736--3745, Dec. 2006.

\bibitem{donoho03}
D.~L. Donoho and M.~Elad, ``Optimally sparse representation in general
  (nonorthogonal) dictionaries via $\ell^1$ minimization,'' \emph{Proc. Nat.
  Acad. Sci. USA}, vol. 100, no.~5, pp. 2197--2202, March 4, 2003.

\bibitem{campbell79}
S.~L. Campbell and C.~D. Meyer, Jr., \emph{Generalized Inverses of Linear
  Transformations}.\hskip 1em plus 0.5em minus 0.4em\relax London, UK: Pitman,
  1979.

\bibitem{kendall-vol1}
A.~Stuart and J.~K. Ord, \emph{Kendall's Advanced Theory of Statistics},
  6th~ed.\hskip 1em plus 0.5em minus 0.4em\relax London: Edward Arnold, 1994,
  vol.~1.

\bibitem{sidak67}
Z.~\v{S}id\'ak, ``Rectangular confidence regions for the means of multivariate
  normal distributions,'' \emph{J. Amer. Statist. Assoc.}, vol.~62, no. 318,
  pp. 626--633, Jun. 1967.

\end{thebibliography}

\end{document}